\documentclass[a4paper,11pt]{amsart}
\usepackage{amsmath,amssymb,amsthm} 
\usepackage{graphics}
\usepackage{comment}
\usepackage{enumitem}
\usepackage{tikz}
\usetikzlibrary{calc}
\usetikzlibrary{patterns}
\usetikzlibrary{spy}
\usetikzlibrary{decorations.pathreplacing}

\numberwithin{equation}{section}

\newtheorem{theorem}{Theorem}[section]

\newtheorem{proposition}[theorem]{Proposition}

\theoremstyle{definition}
\newtheorem{definition}[theorem]{Definition}
\newtheorem{remark}[theorem]{Remark}

\theoremstyle{plain}
\newtheorem*{claim*}{Claim}

\setlength{\textwidth}{16.0cm}
\setlength{\textheight}{24cm}
\setlength{\oddsidemargin}{0cm}
\setlength{\evensidemargin}{0cm}
\setlength{\topmargin}{-1.2cm}

\newcommand{\R}{\mathbb{R}}
\newcommand{\N}{\mathbb{N}}

\newcommand{\vhi}{\varphi}
\newcommand{\eps}{\varepsilon}

\newcommand{\lt}{\left}
\newcommand{\rt}{\right}

\lineskiplimit=0pt

\begin{document}

\title[Concentration-compactness for general isoperimetric problems]{A concentration-compactness principle for perturbed isoperimetric problems with general assumptions}
\author[J. Candau-Tilh]{Jules Candau-Tilh}
\address{J. C-T.: Univ. Lille, CNRS, UMR 8524, Inria - Laboratoire Paul Painlev\'e, F-59000 Lille}
\email{jules.candautilh@univ-lille.fr}

	\begin{abstract}	
	Derived from the concentration-compactness principle, the concept of generalized minimizer can be used to define generalized solutions of variational problems which may have components ``infinitely" distant from each other. In this article and under mild assumptions we establish existence and density estimates of generalized minimizers of perturbed isoperimetric problems. Our hypotheses encapsulate a wide class of functionals including the classical, anisotropic and fractional perimeter. The perturbation term may for instance take the form of a potential, a translation invariant kernel or a nonlocal term involving the Wasserstein distance.
	\medskip
	
	\noindent
	\textbf{Keywords and phrases.} Nonlocal isoperimetric problems, concentration-compactness principle, generalized minimizers.
	\medskip
	
	\noindent
	\textbf{2020 Mathematics Subject Classification.} 49J45, 49Q10, 49Q20.
	\end{abstract}
\maketitle

\section{Introduction}

One of the first perturbed isoperimetric problems was formulated by George Gamow in the 1930s for investigating the stability of the atomic nucleus \cite{Gam30}. Given $d \geq 2$, $m>0$ and $\alpha \in (0, d)$ a possible formulation of this variational problem is
\begin{equation*}
\inf_{|E|=m} \bigg\{\mathrm{Per}(E) + \int_{E \times E} \frac{dxdy}{|x-y|^{d-\alpha}}\bigg\},
\end{equation*}
where $\mathrm{Per}$ is the Caccioppoli perimeter and $|E|$ is the Lebesgue measure of $E$. The goal of this variational problem is to model an attractive, short-range force inducing surface tension (the ``perimeter" term) that competes with a repulsive term $V$ acting at a greater distance (the ``perturbation" term, which is often nonlocal). This competition plays a pivotal role in the wide range of geometries the perturbed isoperimetric problem can describe (see for instance \cite[Figure 1]{KnMuNov16}) and both the physics and mathematics communities have explored numerous variants of this problem. In this article we study a generalized version of this problem:
\begin{equation}\label{infclass}
	e(m) = \inf_{|E|=m} \bigg\{ \mathcal{E}(E) = P(E) + V(E) \bigg\},
\end{equation}
where $P$ is a non-explicit, nonnegative perimeter term and $V$ a perturbation term. 

Mathematically speaking, a primary concern in tackling this optimization problem is establishing the existence of solutions. The most challenging task  often is to exhibit convergent minimizing sequences. From a concentration-compactness principle standpoint \cite{Lio84}, lack of compactness in isoperimetric problems can occur when a minimizing sequence admits components fleeing infinitely far apart. However, in such scenarios it may still be possible to show that some relaxed versions of the problem admit minimizers. It is within this framework that we introduce the generalized minimization problem:
\begin{equation}\label{infgen}
e_{\mathrm{gen}}(m) = \inf_{} \Bigg\{\mathcal{E}_{\mathrm{gen}}((E^i)_{i\geq1}) = \sum_{i\geq 1} \mathcal{E}(E^i): \sum_{i\geq 1} |E^i| = m \Bigg\}.
\end{equation}
In the context  of metric measure spaces and with $V=0$, research towards finding minimal assumptions guaranteeing existence of solutions to \eqref{infgen} was carried out in \cite{NovPaoStepTor21}, where existence of generalized isoperimetric clusters was also shown. See also \cite{AntNarPoz22} for a characterization of minimizing sequences for the isoperimetric problem on noncompact $RCD(K,N)$ spaces.  

In addition to the matter of their existence, the question of the regularity of minimizers constitutes a significant aspect of the study of isoperimetric problems. It is indeed well-known in shape optimization theory that studying variational problems in the class of sets whose boundary has some regularity allows for much easier computations and characterization of the solutions. A first step towards establishing regularity properties of minimizers is often to prove that, when they exist, they have density estimates. We say that a set $E$ admits interior (resp. exterior) density estimates when there exists $c, r'>0$ such that for any $0 < r \leq r'$ and $x \in E$ (resp. $E^c$),
\[
 |E\cap B_r(x)|\geq c \, r^d \, (\text {resp. } |E^c\cap B_r(x)| \geq c \, r^d \, ).
\]
It can also be useful to show density estimates for sets that are close (for a given topology) to minimizers of a given isoperimetric problem. Indeed, it is then often possible to modify a bit those sets to obtain actual minimizers of the considered problem. See for instance results obtained in \cite{DeRNeu23} in the context of Riemannian manifolds regarding the regularity of volume-constrained local minimizers of anisotropic surface energies.

Our present goal is to exhibit in the case $V \not = 0$ general assumptions under which:
\begin{itemize}
\item[$-$] \eqref{infclass} and \eqref{infgen} coincide, 
\item[$-$] \eqref{infgen} admits solutions,
\item[$-$] solutions of \eqref{infgen} have density estimates.
 \end{itemize}

\subsection{Main results}\hfill

Let us denote by $(e_i)_{i=1}^d$ the canonical basis of $\R^d$ and specify that all the sets considered in the article are assumed to be at least (Lebesgue) measurable. We start by proving in Section \ref{exisgen} that \eqref{infclass} and \eqref{infgen} coincide under the following set of assumptions (S1). \medskip

\begin{enumerate}[label=(H\arabic*), itemsep=4pt]
	\item\label{vanloc} \textit{Energy of small balls}: $\mathcal{E}(B_r)\to 0$ as $r \to 0$ and $\mathcal{E}(\emptyset) = 0$.
	\item\label{vaninf} \textit{Convergence at infinity}: For any set $E$ with $|E|< \infty$, $\mathcal{E}(E \cap B_R) \to \mathcal{E}(E)$ as $R \to \infty$.
	\item\label{decker} \textit{Vanishing range of action}: If $E,F$ are bounded sets, then $\mathcal{E}(E\cup(F+Le_1))\to\mathcal{E}(E)+\mathcal{E}(F)$ as $L \to \infty$.
\end{enumerate}

\medskip

\begin{proposition}\label{infclassinfgen}
Assume that $\mathcal{E}$ satisfies (S1). Then $\eqref{infclass}=\eqref{infgen}$.
\end{proposition}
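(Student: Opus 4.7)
The proposition amounts to the two inequalities $e_{\mathrm{gen}}(m) \leq e(m)$ and $e(m) \leq e_{\mathrm{gen}}(m)$. The first is immediate: for any $E$ with $|E|=m$, the sequence $(E,\emptyset,\emptyset,\dots)$ is admissible in \eqref{infgen}, and by \ref{vanloc} (which gives $\mathcal{E}(\emptyset)=0$) its generalized energy equals $\mathcal{E}(E)$. Taking the infimum over $E$ yields the bound.

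For the converse, fix $\eta>0$ and a near-minimizer $(E^i)_{i\geq 1}$ of \eqref{infgen} with $\mathcal{E}_{\mathrm{gen}}((E^i))\leq e_{\mathrm{gen}}(m)+\eta$. The plan is to produce one set $F\subset\R^d$ with $|F|=m$ and $\mathcal{E}(F)\leq \mathcal{E}_{\mathrm{gen}}((E^i))+C\eta$, by truncating the sequence to a finite number of bounded pieces, filling in the leftover volume with a small ball, and placing all these bounded pieces very far apart along the $e_1$-axis.

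Concretely, I would proceed as follows. First, since $\sum_{i\geq 1}|E^i|=m<\infty$ and the series $\sum_{i\geq 1}\mathcal{E}(E^i)$ converges, choose $N$ such that both $\sum_{i>N}|E^i|$ and $\sum_{i>N}\mathcal{E}(E^i)$ are smaller than $\eta$. Next, for each $i\leq N$, apply \ref{vaninf} and the fact that $|E^i|<\infty$ (so $|E^i\cap B_R|\to |E^i|$ by monotone convergence) to pick $R_i$ large enough that $F^i:=E^i\cap B_{R_i}$ is bounded with $\mathcal{E}(F^i)\leq \mathcal{E}(E^i)+\eta/N$ and $|E^i|-|F^i|\leq \eta/N$. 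Then set $v=m-\sum_{i=1}^N|F^i|\geq 0$, observe that $v$ can be made arbitrarily small by tightening the previous choices, and use \ref{vanloc} to take a ball $B_{r_v}$ of volume $v$ with $\mathcal{E}(B_{r_v})\leq \eta$.

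It remains to glue the $N+1$ bounded sets $F^1,\dots,F^N,B_{r_v}$ into a single set of volume exactly $m$ without inflating the energy. Here I would iterate \ref{decker}: pick $L_2$ so that $\mathcal{E}(F^1\cup(F^2+L_2e_1))\leq \mathcal{E}(F^1)+\mathcal{E}(F^2)+\eta/N$; the resulting union is bounded, hence \ref{decker} applies again to attach $F^3+L_3 e_1$ with $L_3$ much larger than the diameter of what we have so far; and so on up to $B_{r_v}$. This inductive translation argument is the main technical point, since \ref{decker} is stated only pairwise and one must choose the translation parameters recursively in the right order. Summing all the error terms gives $\mathcal{E}(F)\leq \mathcal{E}_{\mathrm{gen}}((E^i))+C\eta$ for the resulting set $F$ with $|F|=m$, hence $e(m)\leq e_{\mathrm{gen}}(m)+(C+1)\eta$, and letting $\eta\to 0$ concludes the proof.
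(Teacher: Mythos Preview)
Your proposal is correct and follows essentially the same approach as the paper's proof: both directions are handled identically (the trivial inclusion via $(E,\emptyset,\dots)$, then truncation to finitely many pieces, bounded truncation via \ref{vaninf}, mass compensation by a small ball via \ref{vanloc}, and recursive application of \ref{decker} to place the pieces far apart). The only cosmetic differences are that the paper uses a single common radius $R$ for all $E^i\cap B_R$ and a single translation step $L$ (with pieces placed at $iLe_1$), whereas you allow individual radii $R_i$ and build the translations inductively; neither choice affects the argument.
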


Let us comment a bit on (S1). We use \ref{vanloc} to compensate for any potential mass deficit when we modify a set $E$ to construct a generalized minimizer $(E^i)_i$. However, there are alternative methods to ensure that the mass constraint is satisfied when solving \eqref{infclass} or \eqref{infgen} (see e.g. Remark \ref{scaling}). \ref{decker} states that bounded sets do not interact when infinitely far apart from each other, so that they may be seen as components of a generalized minimizer.\medskip

We then show that \eqref{infgen} admits minimizers. To prove this result we introduce the functionals $E \mapsto P(E, U)$ and $E \mapsto V(E,U)$, which are defined relatively to a Lebesgue measurable set $U$. By convention, we write $P(E, \R^d) = P(E)$ and $V(E, \R^d) = V(E)$. The set of assumptions (S2) we require to establish that \eqref{infgen} has solutions is as follows (we use the letter $\mathcal{F}$ to denote $P$ or $V$ in hypotheses applying to both terms):\medskip

\begin{enumerate}[resume*, label=(H\arabic*)]
	\item\label{reliso} \textit{Relative isoperimetric inequality}: There exists $r_0>0$ and $ f_1 : \mathbb{R}_+ \to \mathbb{R}_+$ increasing with $f_1(0)=0$, $m \mapsto f_1(m)/m$ nonincreasing and $\lim_{m \to 0} f_1(m)/m = \infty$ such that for $r \leq r_0$, $x \in\R^d$ and $E \subset \R^d$:
	\begin{equation*}
		\min \big(f_1(|E \cap B_r(x)|), f_1(| B_r(x) \setminus E|)\big) \leq P(E, B_r(x)).
	\end{equation*}
	\item\label{trainvloc} \textit{Periodicity}: There exists $0 < r_1 \leq 2 r_0/\sqrt{d}$ such that $\mathcal{F}(E + r_1 , U + r_1 e_k) = \mathcal{F}(E, U)$ for any $1 \leq k \leq d$ and $E, U \subset \R^d$.
	\item\label{Psetop} \textit{Perimeter set operations}: Given $E \subset \R^d$, if $U \subset U'$ are open, then $P(E, U) \leq P(E, U')$, and if $(U_i)_{i=1}^I$ are open disjoint sets, then $\sum_{i=1}^I P(E, U_i) \leq P(E, \cup_{i=1}^I U_i)$.
	\item\label{bdEbdP} \textit{Bounded perimeter}: If $(E_n)_n$ is such that $\sup_n \mathcal{E}(E_n) < \infty$ and $\sup_n |E_n| < \infty$, then $\sup_n P(E_n) < \infty$. 
	\item\label{com} \textit{Compactness}: If $(E_n)_n$ satisfies $\sup_n P(E_n, U) < \infty$, then up to extraction there exists $E \subset \R^d$ such that $ E_n \cap U \to E$ in $L^{1}_{\mathrm{loc}}$ as $n \to \infty$.
	\item\label{semconloc}  \textit{Lower semicontinuity}: Given $E \subset \R^d$ and  a bounded open set $U$,  if $E_n \to E$ in $L^1_{\mathrm{loc}}$ as $n \to \infty$ with $\sup_n |E_n| < \infty$, then $\mathcal{F}(E, U) \leq \liminf_n \mathcal{F}(E_n, U)$.
	\item\label{bep} \textit{Beppo-Levi}: If $(U_n)_{n\geq 0}$ is a nondecreasing sequence of open sets exhausting $\R^d$, then for any $E \subset \R^d$, we have $\mathcal{F}(E, U_n) \to \mathcal{F}(E)$ as $n \to \infty$.
	\item\label{wksupadd} \textit{Weak superadditivity}: For any $m>0$ there exists $\eta_1, \eta_2: \R_+ \to \R_+$ with  $\eta_1$ continuous, $\eta_1(0)=0$ and $\eta_2(r) \to 0$ as $r\to\infty$ such that the following holds: for any $E\subset \R^d$ with $|E|\leq m$ and any finite family of balls $(B^i)_{i=1}^I$ of radius $R>0$ such that $\min_{i\neq j}\mathrm{dist}(B^i, B^j) \geq 5R$,
	\[
	\sum_{i=1}^I V(E, B^i) \leq V(E) + \eta_1\Big(\big|E \setminus \cup_{i=1}^I B_i\big|\Big) + \eta_2(R).
	\] 
\end{enumerate}

\medskip

\begin{theorem}\label{infgenmingen}
	Assume that the relative functionals of $P$ and $V$ satisfy (S2) and that \eqref{infclass} and \eqref{infgen} coincide. Then, \eqref{infgen} admits a solution.
\end{theorem}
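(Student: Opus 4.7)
The plan is to build the pieces $(E^i)_{i\geq 1}$ by an iterative concentration argument along a minimizing sequence for the classical problem \eqref{infclass}, and then verify that the total mass is conserved and that the energy is lower semicontinuous along this construction.

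Since $\eqref{infclass}=\eqref{infgen}$, I begin from a sequence $(E_n)_n$ with $|E_n|=m$ and $\mathcal{E}(E_n)\to e_{\mathrm{gen}}(m)$. Hypothesis \ref{bdEbdP} gives $\sup_n P(E_n)<\infty$. Partition $\R^d$ by the lattice cubes $Q_k=r_1 k+[0,r_1)^d$, $k\in\Z^d$; since $r_1\leq 2r_0/\sqrt d$, each cube is inscribed in a ball of radius $r_0$, so \ref{reliso} applies. Setting $\mu_n=\sup_k|E_n\cap Q_k|$, a covering argument combining \ref{reliso} with the monotonicity of $m\mapsto f_1(m)/m$ and the superadditivity in \ref{Psetop} yields a bound of the form $m f_1(\mu_n)/\mu_n\leq C P(E_n)$, forcing $\mu_n\geq \mu>0$. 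By the lattice periodicity \ref{trainvloc}, which leaves $\mathcal{E}$ invariant, I shift $E_n$ so that this maximum is realized at $Q_0$. An exhaustion of $\R^d$ by balls together with \ref{com} and a diagonal extraction then produces $E^1$ with $E_n\to E^1$ in $L^1_{\mathrm{loc}}$ and $|E^1\cap Q_0|\geq \mu$.

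The construction iterates. Having extracted $E^1,\ldots,E^{i-1}$ at translations $x_n^1,\ldots,x_n^{i-1}\in r_1\Z^d$, I seek among the cubes whose distance to all previous $x_n^j$ diverges with $n$ the one of maximal mass; if this mass is bounded below, I extract $E^i$ as in the base step, otherwise the iteration halts. By Fatou, $\sum_i|E^i|\leq m$. To show equality, I apply the same concentration principle to the residual mass: if a positive fraction persisted far from all selected centres, a cube of mass bounded below would survive, contradicting either the stopping criterion or the perimeter bound from the first step. Hence $\sum_{i\geq 1}|E^i|=m$.

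For the lower bound on the energy, fix large $R>0$ and disjoint balls $B^i=B_R(y^i)$ centred at the limiting positions of the pieces with pairwise distances at least $5R$. On the one hand, \ref{Psetop} and \ref{semconloc} give
\[
\sum_{i=1}^I P(E^i,B^i)\leq \liminf_{n\to\infty}\sum_{i=1}^I P(E_n,B^i)\leq \liminf_{n\to\infty}P(E_n),
\]
and letting $R\to\infty$, then $I\to\infty$, via \ref{bep} yields $\sum_i P(E^i)\leq \liminf_n P(E_n)$. On the other hand, \ref{wksupadd} provides
\[
\sum_{i=1}^I V(E_n,B^i)\leq V(E_n)+\eta_1\bigl(|E_n\setminus \cup_i B^i|\bigr)+\eta_2(R);
\]
passing to the limit first in $n$ via \ref{semconloc} (using the mass conservation of the previous paragraph to control $|E_n\setminus \cup_i B^i|$), then in $R\to\infty$ to annihilate $\eta_2$, and finally in $I\to\infty$ via \ref{bep}, I obtain $\sum_i V(E^i)\leq \liminf_n V(E_n)$. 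Summing the two estimates, $\mathcal{E}_{\mathrm{gen}}((E^i))\leq e_{\mathrm{gen}}(m)$, and admissibility makes $(E^i)$ a minimizer.

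The main obstacle is the coupling between the iteration and the energy bound: one must guarantee that the translates $x_n^i$ genuinely diverge from one another, that the residual mass outside the chosen balls vanishes in a quantitative sense, and that the nested limits in $n$, $R$, $I$ remain compatible with \ref{wksupadd}. The error term $\eta_1$ in that hypothesis is precisely what forces the mass-conservation step to be proved before, and not after, the lower semicontinuity of $V$.
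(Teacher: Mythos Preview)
Your strategy matches the paper's: extract pieces from a minimizing sequence by concentration--compactness, then establish lower semicontinuity of $P$ via \ref{Psetop} and of $V$ via \ref{wksupadd}, passing to the limit with \ref{semconloc} and \ref{bep}. The lower-semicontinuity half is essentially identical to what the paper does.

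The extraction step is organised differently. You iterate in Lions' style, peeling off one piece at a time and arguing that any residual mass would force a further concentration point. The paper instead orders \emph{all} cubes by decreasing mass simultaneously, proves uniform summability of the ordered sequence $(|E_n\cap Q^i_n|)_i$ in $n$ (this is the same relative-isoperimetric estimate you use to exclude vanishing, but applied to the full tail), extracts the cube centres $x^i_n$ all at once, and then groups them into equivalence classes according to whether $|x^i_n-x^j_n|$ stays bounded or diverges. Mass conservation then becomes a clean interchange of $\sum_i$ and $\lim_n$ justified by uniform summability, and the equivalence-class picture prevents double counting. Your inductive argument is morally the same computation, but making the condition ``far from all previously chosen centres'' rigorous inside an infinite iteration (a condition that depends on $n$ through a diagonal choice of radii) needs bookkeeping you have left implicit; the paper's simultaneous extraction sidesteps this.

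One point to correct: there are no ``limiting positions $y^i$''---the centres $x_n^i$ typically escape to infinity. One must work with the moving balls $B_R(x_n^i)$ and use the lattice periodicity \ref{trainvloc} to rewrite $\mathcal{F}(E_n,B_R(x_n^i))=\mathcal{F}(E_n-x_n^i,B_R)$ before invoking \ref{semconloc} on the fixed ball $B_R$; this is precisely why \ref{trainvloc} is needed, and why the centres are taken on the lattice $r_1\Z^d$.
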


These assumptions deserve some comments. In terms of the concentration-compactness principle, the relative isoperimetric inequality \ref{reliso} allows us to exclude the ``vanishing" case. \ref{trainvloc} is a weakened form of the invariance by translation. \ref{bdEbdP} is trivial when the perturbation term is nonnegative. Let us point out that in \cite{NovPaoStepTor21} the authors establish existence of isoperimetric clusters in homogeneous metric spaces and with $V=0$. In particular, their results imply existence of isoperimetric sets in $\R^d$. A comparison between (S2) and their set of hypotheses reveals that they are essentially identical. Indeed when $V=0$, hypotheses \ref{bdEbdP} and \ref{wksupadd} are superfluous and (S2) is analogous to the hypotheses of \cite[Section 2 \& Theorem 3.3]{NovPaoStepTor21}. Points $(i)$ and $(ii)$ of \cite[Theorem 3.3]{NovPaoStepTor21} are obtained in our case through the partition of $\R^d$ into cubes.

In the first part of Section \ref{densest}, we show that $\rho$-minimizers of the perimeter (see \cite[Section 21]{Maggi12} for the related concept of $(\Lambda, r_0)$-minimizers of the perimeter) have interior and exterior density estimates under the set of hypotheses (S3).
\begin{definition}\label{quasimin}
	Let $\rho : \mathbb{R}_+ \to \mathbb{R}_+$ be nondecreasing. We say that $E \subset \R^d$ is a $\rho$-minimizer of the perimeter (or simply  a $\rho$-minimizer) if there exists $r_2>0$ such that for any $r \leq r_2$, $x\in\mathbb{R}^d$ and $E' \subset \R^d$ with $E \Delta E'\subset B_r(x)$ we have
	\begin{equation}\label{eqquasimin}
		P(E) \leq P(E') + \rho(r).
	\end{equation} 
	The function $\rho$ is called the error function for $E$.
\end{definition}
\noindent
The set of assumptions (S3) is made of \ref{reliso} (relative isoperimetric inequality) and \ref{Psetop} (set operations) as well as three new hypotheses:\medskip
\begin{enumerate}[resume*, label=(H\arabic*)]
	\item\label{loccom} \textit{Local comparisons}: For $E \subset \R^d$, $x \in \R^d$ and a.e. $r>0$,
	\[
	P(E) = P(E, B_r(x)) + P(E, B_r(x)^c).
	\]
	Additionally, for some $C>0$:
	\begin{align*}
	P(E \setminus B_r(x)) &\leq P(E, B_r(x)^c) + CP(B_r(x), E),\\
	P(E \cup B_r(x)) &\leq P(B_r(x), E^c) + P(E) - C P(E^c, B_r(x)).
	\end{align*}
	
	\item\label{intdif} \textit{Integral inequality}: There exists $ \, f_2: \mathbb{R}_+ \times \mathbb{R}_+ \to \mathbb{R}_+$  and $r_2$ such that for any $E\subset \R^d$, $x\in\R^d$ and $0 < r \leq r_2$
	\[
	\frac{1}{r}\int_0^r P(B_s(x), E) ds \leq  f_2(r, |E \cap B_r(x)|).
	\]
	\item\label{cro} \textit{Density scale factor}: Let $f_1$ and $f_2$ be given by \ref{reliso} and \ref{intdif} respectively and define
	\[
	f_3(r, m) = 2^d\lt(\frac{f_2(r, m)+\rho(r)}{f_1(m)}\rt) \quad \text{for}\quad r,m>0.
	\]
	Then, there exist $r_3, \eps_1>0$ such that
	\[
	f_3(r,m) \leq 1 \; \; \text{for every}\; \; r \leq r_3 \; \; \text{and every} \; \; \frac{\eps_1}{2^d} r^d < m \leq \eps_1 r^d. 
	\]
\end{enumerate}
\begin{theorem}\label{densityestimates}
	Let $E \subset \R^d $ be a $\rho$-minimizer of the perimeter for some error function $\rho$. If (S3) holds, then there exists $C_0, r_4>0$ such that for $r \leq r_4$,
	\begin{equation}\label{intdens}
		|E \cap B_r(x) | \geq C_0 r^d \text{ for every } x \in E^{(1)}
	\end{equation}
	and
	\begin{equation}\label{outdens}
		|B_r(x) \setminus E| \geq C_0 r^d \text{ for every } x \in E^{(0)},
	\end{equation}
	where  for $t \in [0,1]$, $E^{(t)}$ denotes the points of density $t$ of $E$.
\end{theorem}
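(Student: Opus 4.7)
The plan is to adapt the classical De Giorgi-style argument for density estimates of quasiminimizers of the perimeter (cf.\ \cite{Maggi12}, Chapter 21) to the abstract setting of (S3). I focus on the interior estimate \eqref{intdens} for $x \in E^{(1)}$; the exterior estimate \eqref{outdens} is symmetric, relying on the second inequality in \ref{loccom} and the competitor $E \cup B_r(x)$ in place of $E \setminus B_r(x)$.

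Set $u(r) := |E \cap B_r(x)|$. The starting point is a basic scale inequality. Using $\rho$-minimality with the competitor $E \setminus B_r(x)$ (for which $E \Delta E' \subset B_r(x)$), combined with the decomposition in \ref{loccom} and its first inequality, one obtains $P(E, B_r(x)) \leq C\, P(B_r(x), E) + \rho(r)$. For $r$ such that $u(r) \leq |B_r|/2$, the relative isoperimetric inequality \ref{reliso} then yields
\[
f_1(u(r)) \leq C\, P(B_r(x), E) + \rho(r). \quad (\star)
\]

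The argument then proceeds by contradiction. Choose $r_4 \leq \min(r_2, r_3)$ and $\eps_1 < \omega_d$, and suppose $u(\bar r) < (\eps_1/2^d)\, \bar r^d$ for some $\bar r \leq r_4$. Since $x \in E^{(1)}$ implies $u(r)/r^d \to \omega_d$ as $r \to 0^+$, by continuity and the intermediate value theorem there exists $r^* \in (0, \bar r)$ at which $u(r^*)/r^{*d}$ lies in the window $(\eps_1/2^d, \eps_1]$ from \ref{cro}, so that $2^d(f_2(r^*, u(r^*)) + \rho(r^*)) \leq f_1(u(r^*))$. Monotonicity of $u$ moreover forces $r^* \leq \bar r /2$. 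Restricting to a subinterval $I \subset (0, r^*]$ on which $u(s) \leq |B_s|/2$ (i.e., beyond the scale where the density-$1$ regime fades), integrating $(\star)$ over $I$, and bounding $\int_0^{r^*} P(B_s(x), E)\, ds$ by $r^* f_2(r^*, u(r^*))$ via \ref{intdif}, one arrives at an inequality between $\int_I f_1(u(s))\, ds$ and $r^* f_1(u(r^*))$. Combining this with the lower bound $f_1(u(s)) \geq (u(s)/u(r^*))\, f_1(u(r^*))$ (from the nonincreasingness of $m \mapsto f_1(m)/m$) and the bound $f_2(r^*, u(r^*)) + \rho(r^*) \leq 2^{-d} f_1(u(r^*))$ from \ref{cro}, one derives a contradiction once $\eps_1$ is taken small enough to absorb the constant $C$ from \ref{loccom}.

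The main technical obstacle is that \ref{intdif} controls $P(B_s(x), E)$ only in an averaged sense over $s \in (0, r)$, whereas the classical proof (where $P(B_r(x), E) = \mathcal{H}^{d-1}(\partial B_r(x) \cap E) = u'(r)$ a.e.) uses this pointwise identity to derive a differential inequality $f_1(u) \leq C u' + \rho$ that can be integrated directly. Replacing the differential inequality by an integrated one forces one to track several scales simultaneously (the scale $r^*$ from \ref{cro} and the intermediate scales where $(\star)$ is applied) and to calibrate the factor $2^d$ in \ref{cro} against the constant $C$ from \ref{loccom}; this is what dictates the interplay in the definition of $f_3$. A secondary subtlety is the handling of boundary cases where $u(r)/r^d$ sits at an endpoint of the \ref{cro} window, resolved by continuity.
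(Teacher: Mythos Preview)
Your assembly of the ingredients does not close. The inequality you call $(\star)$, namely $f_1(u(s))\le C\,P(B_s(x),E)+\rho(s)$, is correct for a.e.\ $s$ with $u(s)\le|B_s|/2$, but integrating it over $I\subset(0,r^*]$ and invoking \ref{intdif} and \ref{cro} only yields an \emph{upper} bound of the form $\int_I f_1(u(s))\,ds \le C\,2^{-d} r^* f_1(u(r^*))$, and then (via $f_1(m)/m$ nonincreasing) $\int_I u(s)\,ds \le C\,2^{-d} r^* u(r^*)$. You never produce a competing lower bound on $\int_I u(s)\,ds$ that would contradict this. A natural attempt --- take $I=[r^*/2,r^*]$ and use $u(s)\ge u(r^*/2)$ together with the definition of $r^*$ --- only gives $\int_I u(s)\,ds \gtrsim 2^{-d} r^* u(r^*)$, so the would-be contradiction reduces to a constraint on the constant $C$ from \ref{loccom} that need not hold; taking $\eps_1$ small does not help, since both sides scale identically in $\eps_1$. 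There is a second difficulty: $(\star)$ requires $u(s)\le|B_s|/2$, but on $(0,r^*]$ you have no control on $u(s)/s^d$ from above (and in fact $u(s)/|B_s|\to1$ as $s\to0$ since $x\in E^{(1)}$), so you cannot guarantee that $I$ contains an interval of length comparable to $r^*$. Incidentally, the claim ``monotonicity of $u$ forces $r^*\le \bar r/2$'' is also false: from $u(r^*)\le u(\bar r)<(\eps_1/2^d)\bar r^{\,d}$ and $u(r^*)>(\eps_1/2^d)r^{*d}$ you only get $r^*<\bar r$.

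The fix, which is what the paper does, is to decouple the two uses of scale. Instead of applying $(\star)$ at every $s$ and integrating, use \ref{intdif} through the mean value inequality to select a \emph{single} $t\in[r/2,r]$ with $P(B_t(x),E)\le 2f_2(r,m(r))$; apply $\rho$-minimality with competitor $E\setminus B_t(x)$ and \ref{loccom} at this $t$ alone to get $P(E,B_t(x))\le C f_2(r,m(r))+\rho(r)$; then use the monotonicity in \ref{Psetop} to pass to $P(E,B_{r/2}(x))$; and only \emph{then} apply the relative isoperimetric inequality, at the single inner scale $r/2$, where $m(r/2)\le m(r)\le \eps_1 r^d\le|B_{r/2}|/2$ is automatic once $\eps_1\le\omega_d/2^{d+1}$. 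This produces $f_1(m(r/2))\le C(f_2(r,m(r))+\rho(r))$, and the nonincreasingness of $f_1(m)/m$ together with \ref{cro} gives the clean dyadic step
\[
\frac{m(r)}{r^d}\le\eps_1\ \Longrightarrow\ \frac{m(r/2)}{(r/2)^d}\le\eps_1,
\]
which iterated contradicts $x\in E^{(1)}$. The exterior estimate is indeed symmetric, using $E\cup B_t(x)$ and the second inequality in \ref{loccom}.
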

\noindent
Let us provide some context on (S3) and Theorem \ref{densityestimates}. Density estimates for $\rho$-minimizers of the perimeter are often an important tool in the study of isoperimetric problems. Indeed, it can be used to show that said $\rho$-minimizers have bounded connected components. Additionally, it is usually a crucial first step in the study of the spherical excess of minimizers (see \cite[Section 22 \& 26]{Maggi12}), a central concept of the regularity theory for minimizers of isoperimetric problems. For illustrations of this concept, one can refer to \cite{GolNovRuf22} for the anisotropic perimeter and to \cite{CafRoqSav} for the fractional perimeter. Additionally, when the density estimates are independent of the considered $\rho$-minimizer, it may be possible to exhibit minimizers for the classical problem \eqref{infclass} provided additional assumptions on the perturbation term $V$ (such as finite or rapidly decreasing range of action). See \cite{Rig2000} for an illustration of this approach with $P$ the classical perimeter and $V$ a nonlocal kernel. Refer also to \cite{CesaNov17} for an example where $P$ is the fractional perimeter and $V$ is the integral of a periodic function and is not necessarily positive, or to \cite{CanGol22} for the case of $P = \mathrm{Per}$ and $V$ is defined using the Wasserstein distance. \medskip

The methodology employed to establish this kind of theorems is now well understood. Since the publication of De Giorgi's seminal papers on the classical isoperimetric problem in the 1950s, various strategies been developed to address isoperimetric problems where the considered perimeter is anisotropic or nonlocal, or with different perturbation terms. However, most of these proofs revolve around the same idea: apply the relative isoperimetric inequality to $E$ (resp. $E^c)$ and integrate this inequality to obtain interior (resp. exterior) density estimates (see \cite[Remark 15.16]{Maggi12}). Consequently, we have aimed at formulating streamlined hypotheses to encompass this shared framework, and also to simplify the process of establishing density estimates in future research on isoperimetric problems. In our framework, we need \ref{loccom} to deduce local results from the $\rho$-minimality of a set $E$, which is a priori a global property. \ref{intdif} is used together with the relative isoperimetric inequality to allow us to compare perimeters and Lebesgue measures. Finally, \ref{cro} ensures that the error function $\rho$ of a $\rho$-minimizer $E$ is a perturbation of higher order of $P(E)$. 

\begin{remark}
	The conditions on $\rho$ specified in \ref{cro} are mild enough that $\rho$-minimizers of the anisotropic perimeter $P_{\phi}$ (resp. of the fractional perimeter $P_s$) have density estimates in the following two cases:
	\begin{itemize}
	\item[$-$] $\rho(r) = C r^{d-1 + \alpha}$ (resp. $\rho(r) = C r^{d-s+\alpha}$) with $\alpha \in (0,1)$ and any $C >0$,
	\item[$-$] $\rho(r) = C r^{d-1}$ (resp. $\rho(r) = C r^{d-s}$) and $C$ small enough.
	\end{itemize}
	Theorem \ref{densityestimates} is thus in accordance with \cite[Proposition 3.1]{GolNovRuf22} and \cite[Theorem 5.7]{CinPra17}.
\end{remark}

In the second part of Section \ref{densest}, we establish the connection between generalized minimizers and $\rho$-minimizers. We prove that generalized minimizers of \eqref{infgen} are $\rho$-minimizers of the perimeter for some $\rho$ in two different situations: a case where $P$ admits volume-fixing variations and a case where both $P$ and $V$ have a scaling property.

\begin{definition}\label{volfix}
	Let $E \subset \R^d$ be such that $P(E) + V(E) < \infty$. We say that $E$ admits volume fixing variations if there exist  $g_1,g_2 : \mathbb{R}_+ \to \mathbb{R}_+$ nondecreasing and $r_5, \eps_2 >0$ with the following properties : if $E' \subset \R^d$ is such that $E \Delta E' \subset B_{r_5}(x)$ for some $x \in \R^d$, then
	\begin{enumerate}
		\item for any $\eps$ such that $|\eps|< \eps_2$, there exist $F \subset \mathbb{R}^d$ and $x_0 \in \R^d$ such that $B_{r_5}(x)$  and  $B_{r_5}(x_0)$ are disjoint and
		\begin{equation*}
			F \Delta E \subset B_{r_5}(x_0), \quad |F|-|E| = \eps, \quad \mathcal{E}(F) \leq \mathcal{E}(E) + g_1(|\eps|).
		\end{equation*}
		\item If $F, F'\subset \R^d$ are such that $E \Delta E' = F \Delta F'  \subset B_{r}(x)$ for $r \leq r_5$ and $E \Delta  F = E' \Delta F' \subset B_{r_5}(x_0)$ with $B_{r_5}(x)$ and $B_{r_5}(x_0)$ disjoint, then
		\begin{equation}
			P(F') - P(F)  \leq P(E') - P(E) + g_2(r).
		\end{equation}
	\end{enumerate}
\end{definition}

\medskip
\noindent
We now introduce the following set of hypotheses, denoted (S4):\medskip
\begin{enumerate}[resume*, label=(H\arabic*)]
	\item\label{sca} \textit{Scaling}: If $E$ minimizes \eqref{infclass}, then there exists $ \alpha, \beta \in \R$ and $t_0>0$  such that
	\begin{equation}\label{scalhyp}
		P(tE) \leq t^{\alpha} P(E) \quad \text{and} \quad V(tE) \leq t^{\beta} V(E) \quad \text{for any } t \text{ such that } |t - 1|\leq t_0.
	\end{equation}
	Additionally there exists $\delta \in [0,1]$, $\gamma \geq 0$ and $C_1 >0$ (if $\delta=0$ we require $0 < C_1 < 1$) such that for any $E \subset \R^d$,
	\begin{equation}\label{addhypquantV}
	V(E) \geq -C_1 |E|^{\delta} P(E)^{1-\delta}.
	\end{equation}
	\item\label{eps} \textit{Volume-fixing variations}:  If $E$ solves \eqref{infclass}, then $E$ admits volume-fixing variations.	
	\item\label{loc} \textit{Local perturbation control}: There exists $\, v : \R_+ \to \R_+$ nondecreasing and $r_6 > 0$ such that for $r \leq r_6$, if $E, E' \subset \R^d$ satisfy $E \Delta E' \subset B_r(x)$ for some $x \in \R^d$, then
	\[
	\lt| V(E) - V(E') \rt| \leq  v(r).
	\]
\end{enumerate}

\begin{proposition}\label{mingentoquasimin}
Assume that the relative functionals of $P$ and $V$ satisfy (S4) with either \ref{sca} or \ref{eps}. Then every component of a generalized minimizer of \eqref{infgen} is a $\rho$-minimizer of the perimeter for an error function $\rho$. The error function $\rho$ is defined by selecting the function equivalent to $r \mapsto C r^{c}$ with the smallest possible $c>0$ among
\begin{itemize}
	\item $r \mapsto  Cr^d$ and $v$ if \ref{sca} holds (C depending on the constants appearing in \ref{sca}),
	\item $g_1$, $g_2$ and $v$ if \ref{eps} holds.
\end{itemize}

\end{proposition}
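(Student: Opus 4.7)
The plan is to exploit the minimality of the generalized family $(E^j)_{j\ge 1}$ by substituting the perturbation $E'$ for the component $E^i$ and absorbing the resulting mass change via a local modification elsewhere; the inequality so obtained translates into the $\rho$-minimality of $E^i$ once \ref{loc} is used to control the $V$-contribution. Fix $E^i$ and $E'$ with $E^i\Delta E'\subset B_r(x)$ for $r$ small, and set $\eps:=|E^i|-|E'|$, so $|\eps|\le|B_r|$. A preliminary observation is that each component $E^j$ is itself a minimizer of \eqref{infclass} at mass $|E^j|$: any strictly better set of the same mass, once substituted into the family, would contradict the minimality of $(E^j)_j$. This is what lets us apply \ref{sca} and the volume-fixing variations of \ref{eps} to individual components.

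\medskip

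\noindent
Under \ref{eps}, I apply Definition~\ref{volfix}(1) to $E^i$ centered at $x$ with mass increment $\eps$, obtaining a set $F$ and a point $x_0$ with $B_{r_5}(x)\cap B_{r_5}(x_0)=\emptyset$, $F\Delta E^i\subset B_{r_5}(x_0)$, $|F|=|E^i|+\eps$, and $\mathcal{E}(F)\le\mathcal{E}(E^i)+g_1(|\eps|)$. I then build the competitor $F':=F\Delta(E^i\Delta E')$, for which the disjointness of the two balls gives $F'\Delta F=E^i\Delta E'\subset B_r(x)$, $E'\Delta F'=E^i\Delta F\subset B_{r_5}(x_0)$, and $|F'|=|F|+|E'|-|E^i|=|E^i|$; in particular $F'$ may replace $E^i$ in the family as a mass-$m$ competitor, and minimality gives $\mathcal{E}(E^i)\le\mathcal{E}(F')$. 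The three ingredients $P(F')-P(F)\le P(E')-P(E^i)+g_2(r)$ from Definition~\ref{volfix}(2), $V(F')-V(F)\le v(r)$ from \ref{loc} (since $F'\Delta F\subset B_r(x)$), and $\mathcal{E}(F)-\mathcal{E}(E^i)\le g_1(|\eps|)$ from Definition~\ref{volfix}(1) then chain together to yield $P(E^i)\le P(E')+g_1(|B_r|)+g_2(r)+v(r)$.

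\medskip

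\noindent
Under \ref{sca}, I pick another nonempty component $E^k$ and scale it to $tE^k$ with $t^d=1+\eps/|E^k|$, so $|tE^k|=|E^k|+\eps$ and $|t-1|=O(r^d)$. The substitution $E^i\mapsto E'$ and $E^k\mapsto tE^k$ preserves the total mass, so minimality gives $\mathcal{E}(E^i)+\mathcal{E}(E^k)\le\mathcal{E}(E')+\mathcal{E}(tE^k)$. Since $E^k$ solves \eqref{infclass}, \ref{sca} yields $\mathcal{E}(tE^k)\le t^\alpha P(E^k)+t^\beta V(E^k)$; combined with \eqref{addhypquantV} (which controls $V(E^k)$ from below) and $|t-1|=O(r^d)$, this gives $\mathcal{E}(tE^k)-\mathcal{E}(E^k)=O(r^d)$. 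Applying \ref{loc} to bound $|V(E^i)-V(E')|\le v(r)$ then produces $P(E^i)\le P(E')+Cr^d+v(r)$.

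\medskip

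\noindent
The main obstacle will be the single-component case under \ref{sca}: with no auxiliary $E^k$ available to absorb the mass change, one must work directly, for instance by adjoining a small ball whose energy is controlled through \ref{vanloc} when $\eps>0$, or by excising a small ball from $E'$ at an interior density point and controlling the resulting perimeter change through \ref{loccom} when $\eps<0$. A secondary subtlety in the \ref{eps} construction is verifying that the built pair $(F,F')$ satisfies the hypotheses of Definition~\ref{volfix}(2) exactly, which relies on the disjointness of $B_{r_5}(x)$ and $B_{r_5}(x_0)$. Once these points are settled, the explicit form of $\rho$ follows by retaining only the dominant term --- the candidate error function with the smallest exponent $c>0$ --- among those produced by the argument.
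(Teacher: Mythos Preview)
Your treatment of the \ref{eps} case is essentially the paper's argument: build $F$ via Definition~\ref{volfix}(1), then the competitor $F'$ with $|F'|=|E^i|$, and chain the three estimates (Definition~\ref{volfix}(1), Definition~\ref{volfix}(2), and \ref{loc}) to conclude. The verification that the pair $(F,F')$ meets the hypotheses of Definition~\ref{volfix}(2) is fine once the disjointness of the two balls is in place.

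For the \ref{sca} case, however, there is a genuine gap. The difficulty you flag --- the single-component case --- is not a side issue but the crux, and your proposed fixes invoke \ref{vanloc} (part of (S1)) and \ref{loccom} (part of (S3)), neither of which is assumed here; the proposition only assumes (S4). So as it stands your argument does not cover a generalized minimizer consisting of a single set, and you have no tool within (S4) to repair it along the lines you sketch.

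The paper's route under \ref{sca} is different and sidesteps the problem entirely. Instead of offloading the mass defect onto another component, it shows that the component $E=E^i$ already minimizes the \emph{penalized} functional
\[
\mathcal{E}_{\Lambda}(E')=\mathcal{E}(E')+\Lambda\big||E'|-m\big|
\]
for some finite $\Lambda$ depending only on the constants in \ref{sca} and on $m=|E^i|$. This is proved by contradiction: if no $\Lambda$ works, there is a sequence $E_n$ with $\mathcal{E}_{\Lambda_n}(E_n)<\mathcal{E}(E)$ and $\Lambda_n\to\infty$; the bound \eqref{addhypquantV} is used to get $\sup_nP(E_n)<\infty$ and $\sup_n|V(E_n)|<\infty$, which forces $|E_n|\to m$, and then one compares $E$ with the rescaled competitor $t_nE_n$ (where $t_n^d=m/|E_n|$) via \eqref{scalhyp} to obtain $\Lambda_n\le C$, a contradiction. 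Once $E$ minimizes $\mathcal{E}_{\Lambda}$, the comparison with $E'$ gives directly
\[
P(E)\le P(E')+v(r)+\Lambda\,\omega_d r^d,
\]
with no auxiliary component needed. Your multi-component argument is a legitimate shortcut when a second nonempty $E^k$ is available, but the penalization argument is what actually proves the proposition in full generality.
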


\noindent
Allow us to comment on (S4). We rely on the classical idea that if for some $i \geq 1$, $E^i \subset \R^d$ is the component of a generalized minimizer of \eqref{infgen}, then it is a minimizer of \eqref{infclass} with the constraint $m = |E^i|$. We then need to use either \ref{sca} or \ref{eps} to relax the mass constraint in order to be able to compare $E^i$ with a set $E'$ such that $E^i \Delta E' \subset B_r(x)$ for some $x \in \R^d$ and $r$ small enough. The scaling hypothesis \ref{sca} is a well-known method (see e.g \cite[Proposition 4.6]{NovOno22} or \cite[Proposition 3.3]{CanGol22}), but when $V$ is not necessarily positive, we lose the fact that the boundedness of $\mathcal{E}$ implies boundedness of $P$ and $V$ so that additional hypotheses are needed to control the growth of $V$. The first point of Definition \ref{volfix} appearing in \ref{eps} is inspired by the classical ``volume-fixing variations" lemma (or ``Almgren's lemma'' (see \cite[Lemma 17.21]{Maggi12}). Let us also point out that the second point of Definition \ref{volfix} is to account for perimeters with nonlocal properties, as one may take $g_2= 0$ if $P$ is the classical or anisotropic perimeter (see the definitions below). 
Eventually using \ref{loc} to deal with local perturbations of $V$, we obtain that $E$ verifies \eqref{eqquasimin} for some error function $\rho$.

\subsection{Application to three perturbed isoperimetric problems }\label{sec_examples}\hfill.

Let us present some examples from the literature of perimeter and perturbation terms satisfying the sets of hypotheses (S1) to (S4), or only (S1) and (S2) in the case of the considered Dirichlet energy. In Section \ref{applications} we provide a proof of this statement for three different perturbed isoperimetric problems. Additionally, we briefly comment on the other examples mentioned below.

Regarding the perimeter, we consider its anisotropic and anisotropic nonlocal versions. For $E, U \subset \R^d$ we set
\begin{gather*}
P_{\phi}(E, U) = \int_{(\partial^* E) \cap U} \phi(\nu_E(x)) \, d\mathcal{H}^{d-1}(x), \\
P_{K}(E, U) = \int_{(E\cap U)\times E^c} K(x-y) dxdy.
\end{gather*}
The value $P_{\phi}(E)$ is well-defined if $E$ is of finite Caccioppoli perimeter, and then $\partial^* E$ denotes the reduced boundary of $E$. The anisotropy $\phi$ is a nonnegative, one-homogeneous, convex and coercive functional. In particular, there exists $0 < C_{\phi}' \leq C_{\phi}$ such that for $x \in \R^d$,
\[
C_{\phi}' |x| \leq \phi(x) \leq C_{\phi}|x|.
\]
If $\phi = |\cdot|$ we recover the classical perimeter. Regarding the nonlocal perimeter, we require that there exist $C'_K, C_K>0$ and $s \in (0,1)$ such that for $x \in \R^d$,
\[
C'_K \lt|x\rt|^{-(d+s)} \leq K(x) \leq C_K\lt|x\rt|^{-(d+s)}.
\]
We additionally require that $K \in W^{1,1}_{\mathrm{loc}}(\R^d \setminus \{0\})$ and that for $x \in \R^d$,
\[
|\nabla K(x)| \leq \lt|x\rt|^{-(d+s+1)}.
\]
Let us point out that if $U \not = \R^d$, the definition of the relative nonlocal perimeter differs from the one found in the literature (see e.g. \cite[Section 2]{CesaNov17}). When $K=|\cdot|^{-d-s}$, we recover the fractional perimeter and simply write $P_{K} = P_s$.\medskip

The perturbation terms encompassed by our hypotheses can be split into several categories.

\noindent
The first and perhaps most studied in the literature is the Riesz-type kernel: given $E, U \subset \R^d$, we consider
\[
V_{G}(E, U) = \int_{(E\cap U) \times (E \cap U)} G(x-y) \, dxdy
\]
where $G: \R^d \to \R_+$ is continuous on $\mathbb{S}^{d-1}$ and such that there exists $\beta \in (0, d)$ such that for any $t \geq 0$ and $x \in \R^d$
\[
G(tx) \leq t^{-\beta} G(x).
\] 
We refer to \cite{KnMuII13, KnMuNov16, BonaCristo14} for seminal examples where $P = \mathrm{Per}$ and $G(x) = |x|^{-\beta}$ and to \cite{FigFus15} for an example where $P =P_s$ and $G$ is explicit as well. See also \cite{FigMag11} for a study of the anisotropic case and \cite{NovOno22} for a recent development in the case $P = \mathrm{Per}$ and with general kernels.\medskip

\begin{remark}
	Proceeding as in \cite{NovOno22}, one can obtain that generalized minimizers to $P + V_{G}$ exist and have density estimates for $P = \mathrm{Per}$, $P_{\phi}$ or $P_s$ and with $G$, nonnegative, symmetric with respect to the origin, vanishing at infinity and such that 
	\[
	G(tx) \leq t G(x) \quad \text{for } x \in \R^d \text{ and } t \geq 1.
	\]
	This case is not encompassed in our setting, because with these assumptions, neither the scaling hypothesis \ref{sca} nor the volume-fixing hypothesis \ref{eps} hold. However, the mass constraint can still be dealt with using the fact that for the considered perimeters 
	\[
	P(E \cap B_R) \leq P(E) \quad \text{ for any } E \subset \R^d \text{ and } R >0.
	\]
	This classical result is a consequence of the monotonicity of the perimeter regarding intersection with convex sets, which holds for the classical, anisotropic and fractional perimeter, but not for the generalized nonlocal perimeter $P_K$.
\end{remark}

A second family of perturbation terms appears in the prescribed curvature problem. We consider
\[
V_T(E, U) = -\int_{E \cap U} T(x) \, dx,
\]
and assume that $T$ is $L$-periodic and Lipschitz continuous. Refer for instance to \cite{GolNov12} for the case $P = \mathrm{Per}$ and to \cite{CesaNov17} for the case $P = P_s$.

\begin{remark}
If one only wants to establish that (S1) and (S2) are verified for $P + V_T$, weaker hypotheses on $T$ can be considered. We use the Lipschitz continuity assumption to establish that the volume-fixing hypothesis \ref{eps} holds (see Section 4.2).
\end{remark}

Perturbation terms involving optimal transport are studied in \cite{CanGol22}. Given $p \in [1, \infty)$ and denoting by $W_p(E, F)$ the $p$-Wasserstein distance between $E, F \subset \R^d$, one can set for $U \subset \R^d$
\[
V_{\mathcal{W}}(E, U) = \inf_{|F \cap E \cap U| = 0} W_p(E \cap U, F)^p.
\]

Eventually, we can consider a Dirichlet energy as in \cite{DePhilLambPierreVel18} and show that it satisfies (S1) and (S2). Given $E \subset \R^d$, we define the Sobolev-like space 
\[
\hat H^1_0(E) = \big\{u \in H^1(\R^d) : u= 0 \ \text{ a.e. on } E^c \big\}
\]
which is a Hilbert space as it is closed in $H^1(\R^d)$. For $p \in (d, \infty)$ and  $h \in L^p(\R^d)$, the Dirichlet (or torsion) energy of $E$ is then
\[
V_{\mathrm{Dir}}(E) = \min_u \lt\{\frac{1}{2} \int_{\R^d} |\nabla u|^2 \, dx - \int_{\R^d}  uh \,dx : u \in \hat H^1_0(E) \rt\}.
\]
and given $U \subset \R^d$, we set $V_{\mathrm{Dir}}(E, U) = V_{\mathrm{Dir}}(E \cap U)$.

\begin{remark}
	It is actually possible to show that minimizers of $P_{\phi} + V_{\mathrm{Dir}}$ or of $P_K+ V_{\mathrm{Dir}}$ admit interior and exterior density estimates. However, because we focus on the set $\hat H^1_0(E)$ instead of the Sobolev space $H^1_0(E)$ (we have to introduce $\hat H^1_0(E)$ because if $E$ is not open, there may exist a set $E'$ such that $|E \Delta E'| = 0$ but $H^1_0(E') \not = H^1_0(E)$), we were not able to prove (S3) and (S4) exactly as they stand. One can proceed as in \cite[Theorem 1.1]{DePhilLambPierreVel18} and first prove that $E$ admits exterior density estimates, so that it can be correctly be identified with then open set $E^{(1)}$, and then establish interior density estimates for $E^{(1)}$.
\end{remark}

\subsection{Notation and organisation of the article}\

All constants appearing in the article depend on the dimension $d$ and on the functions $(f_i)_i, (g_j)_j, h, \eta, \rho, v$ and parameters $r, \eps$ used in the hypotheses, where $i=1,2,3$ and $j=1,2$. We denote  them with the same letter $C$ when differentiating the constants from one another is not relevant. We write $C = C(E, m)$ to specify an additional dependency on a set $E$ or a parameter $m$. In some statements we write $A \ll B$ to indicate that there exists a constant $\eps> 0$ such that if $A  \leq \eps B$ then the conclusion of the statement holds. \medskip

In Section \ref{exisgen}, we prove that the infima of \eqref{infclass} and \eqref{infgen} coincide, and that \eqref{infgen} admits solutions. In Section \ref{densest}, we first establish that $\rho$-minimizers of \eqref{infgen} have interior and exterior density estimates. We then discuss two cases where generalized minimizers of \eqref{infgen} are $\rho$-minimizers of the perimeter as well. In Section 4, we study three examples of perturbed isoperimetric problems.

\subsection*{Acknowledgments} The author wishes to express his gratitude to his PhD advisors: M. Goldman for suggesting the problem and for several stimulating conversations and B. Merlet for his many helpful comments and suggestions during the writing of this paper.

\section{Existence of generalized minimizers}\label{exisgen}

We start off by establishing Proposition \ref{infclassinfgen}, i.e. that \eqref{infclass} and \eqref{infgen} coincide. \smallskip

	\begin{proof}[Proof of Proposition~\ref{infclassinfgen}]~\
	
	Given a set $E$ with $|E|=m$, we  define the generalized set $(E^i)_i = (E, \, \emptyset, \, \dots, \, \emptyset)$ and have $\mathcal{E}_{\mathrm{gen}}((E^i)_i) = \mathcal{E}(E)$. Hence $e_{\mathrm{gen}}(m) \leq e(m)$.
	\smallskip
		
	Conversely if we let $\eps >0$, there exists $(E^i)_{i}$ admissible for \eqref{infgen} such that 
	\[
	\mathcal{E}_{\mathrm{gen}}((E^i)_i) \leq e_{\mathrm{gen}}(m)+\eps.
	\] 
	Let us show that there exists a set $E$ admissible for \eqref{infclass} such that
	\[
	\mathcal{E}(E) \leq e_{\mathrm{gen}}(m) + 5\eps.
	\]
	By \ref{vanloc} small balls have vanishing energy: there exists $\delta = \delta(\eps)$ such that if $B_{r}$ is a centered ball of radius $r>0$, then 
	\begin{equation}\label{balldelta} 
	|B_r|\le2\delta\ \implies \ \mathcal{E}(B_r)\le\eps.
	\end{equation} 
	As $(E^i)_i$ is of finite energy and mass, there exists an integer $I = I(\eps, \delta)$ large enough that
	\[
	\sum_{i=1}^I \mathcal{E}(E^i) \leq \sum_{i\geq 1} \mathcal{E}(E^i) + \eps \leq e_{\mathrm{gen}}(m) + 2\eps \quad \text{ and } \quad \sum_{i \geq I+1}|E^i| \leq  \delta.
	\]
	Combining this with the convergence at infinity \ref{vaninf} of $\mathcal{E}$, there exists $R = R(\eps, \delta, I)$ large enough that
	\begin{equation}\label{cutoff}
	\sum_{i=1}^{I} \mathcal{E}(E^i \cap B_R) \leq \sum_{i=1}^I \mathcal{E}(E^i) + \eps \leq e_{\mathrm{gen}}(m) + 3 \eps \quad \text{ and } \quad \sum_{i=1}^{I} |E^i \cap B_{R}^c| \leq \delta.
	\end{equation}
	Let $B_r$ be the centered ball with volume 
	\[
	|B_r| = \sum_{i=1}^{I} |E^i \cap B_{R}^c| + \sum_{i \geq I+1}|E^i| \le2 \delta.
	\]
	Given $L >0$, we define the set 
	\[
	E_L = \bigg[\bigcup_{i=1}^{I} \big((E^i \cap B_{R}) + i  L e_1 \big)\bigg] \bigcup \bigg[ B_r +(I+1) L e_1 \bigg].
	\]
	By construction, for $L$ large enough $|E_L| = m$. Using recursively \ref{decker} on the vanishing range of action of $\mathcal{E}$ then yields for that for $L$ large enough
	\begin{equation*}
	\mathcal{E}(E_L) \le \sum_{i=1}^I\mathcal{E}(E^i \cap B_R) + \mathcal{E}(B_r) + \eps \leq e_{\mathrm{gen}}(m) +5 \eps
	\end{equation*}
	where we used \eqref{balldelta} and \eqref{cutoff} in the last inequality. Thus 
	\[
	e(m) \leq e_{\mathrm{gen}} + 5 \eps,
	\]
	and as $\eps>0$ is arbitrary the proof is complete.
\end{proof}

\begin{remark}\label{scaling}
	The vanishing energy of small balls \ref{vanloc} does not hold for perturbation terms $V$ which are $\alpha$-homogeneous with $\alpha < 0$. However, this hypothesis can be replaced by the assumption that $P$ and $V$ are homogeneous for some reals $\alpha, \beta$ and that $V \geq 0$. Proceeding as in the proof of \cite[Proposition 3.3]{CanGol22}, one can then show that there exists $\Lambda = \Lambda (m) \geq 0$ such that 
	\[
	e(m) = \inf_E \bigg\{ \mathcal{E}(E) + \Lambda \Big| |E|-m\Big| \bigg\},
	\]
	and
	\[
	e_{\mathrm{gen}}(m) = \inf_{(E^i)_i} \lt \{\mathcal{E}_{\mathrm{gen}}((E)^i_i) + \Lambda \Big|\sum_i |E^i| -m \Big| \rt\}.
	\]
	We can subsequently reproduce the proof of Proposition \ref{infclassinfgen} without introducing a small ball to compensate mass deficit.
\end{remark}

Under the set of hypotheses $(S2)$, we can prove Theorem \ref{infgenmingen}, i.e. that generalized minimizers of \eqref{infgen} exist. Recall that given $U\subset \R^d$, the localized versions of $P$ and $V$ are denoted by $P(\cdot, U)$ and $V(\cdot, U)$. \smallskip

\begin{proof}[Proof of Theorem~\ref{infgenmingen}]~\
	
	We follow the direct method in the Calculus of Variations. First, we use a classical minimizing sequence to build a generalized set, and then establish lower semi-continuity results to prove that this generalized set is a generalized minimizer.\smallskip
	
	\textit{Step 1. Construction of a generalized set.}\smallskip
	
	Let $(E_n)_{n}$ be a minimizing sequence for \eqref{infclass}. As assumed in the statement of Theorem \ref{infgenmingen}, $(E_n)_{n}$ is also a minimizing sequence for \eqref{infgen}. Let $r_0,r_1>0$ be as in assumptions \ref{reliso} and \ref{trainvloc} on the relative isoperimetric inequality and the periodicity of $P$ and $V$. Notice that $B_{r_0}$ contains the centered cube of side-length $r_1$. We consider a partition $(Q^i_n)_{i,n}$ of $\R^d$ into cubes of side-length $r_1$ and we set
	\[
	m^i_n = |E_n \cap Q^i_n| \quad \text{and} \quad M^i_n = |E_n \cap B^i_n|,
	\]
	where $B^i_n$ is the ball of radius $r_0$ with the same center as $Q^i_n$. Rearranging the sequence we assume that for every $n\geq 0$, $i \mapsto M^i_n$ is nonincreasing. \smallskip
	
	\textit{Step 1.1 }
	Let us now show that the series $\sum_i M^i_n$ is uniformly summable with respect to $n\geq 0$. Notice that there exists $C = C(r_1/r_0)$ such that for every $n\geq 0$
	\[
	\sum_{i\geq 1}\chi_{B^i_n} \leq C, \quad \text{so that} \quad \sum_{i \geq 1} M^i_n \leq C m.
	\]
	Thus as $M^i_n$ is nonincreasing in $i$, for every $I \geq 1$ and $i \geq I$ we have
	\begin{equation}\label{MandI}
	M^i_n \leq M^I_n \leq Cm/I.
	\end{equation}
	Let $\eps >0$. Recall that the function $f_1$ involved in \ref{reliso} is such that there exists $\delta = \delta(\eps)$ such that $m \leq \eps f_1(m)$ for any $m \leq \delta$. By \eqref{MandI} there exists $I = I(\delta)$ such that for any $i \geq I$ we have $M^i_n \leq \delta$. Up to reducing $\delta$ we assume without loss of generality that $|M^i_n| \leq |B_{r_0}|/2$. Then, by \ref{reliso} we have 
	\begin{equation}\label{restofM}
	\sum_{i\geq I} M^i_n \leq \eps \sum_{i\geq I} f_1(M^i_n) \leq \eps \sum_{i \geq I} P(E_n, B^i_n).
	\end{equation}
Given $n\geq 0$, we split the covering $(B^i_n)_i$ of $\R^d$ into $N$ families $\mathcal{B}^1_n,\dots, \mathcal{B}^N_n$ such that $|B^{i}_n \cap B^{j}_n| = 0$ if $B^{i}_n, B^{j}_n \in \mathcal B^{k}_n$ for some $1 \leq k \leq N$ and $i \neq j$. Notice that $N = N(r_1/r_0)$ is uniformly bounded in $n \in \N$. By  \ref{Psetop} we may write
	\begin{align}
	\nonumber
	\sum_{i \geq I} P(E_n, B^i_n) &= \sum_{k=1}^N\, \sum_{\substack{B^i_n\in\mathcal{B}^k_n,\,i \geq I}} P(E_n, B^i_n)\\
	\label{latticeballs}
	&\leq \sum_{k=1}^N P\Big(E_n, \bigcup_{B^i_n \in \mathcal{B}^k_n} B^i_n\Big) \leq \sum_{k=1}^N P(E_n) \leq N P(E_n).
	\end{align}
	By \ref{bdEbdP} $\sup_n \mathcal{E}(E_n) < \infty$ implies that $\sup_n P(E_n) < \infty$. Thus plugging \eqref{latticeballs} into \eqref{restofM} yields
	\[
	\sum_{i \geq I} M^i_n \leq \eps N P(E_n) \leq \eps C'
	\]
	for some constant $C'>0$. This proves that the series $\sum_i M^n_i$ is uniformly summable with respect to $n\geq 0$. As for any $i \geq 1$ and $n\geq 0$, we have $m^i_n \leq M^i_n$, the series $\sum_i m^i_n$ is uniformly summable	 in $n\geq 0$ as well.\medskip

	\textit{Step 1.2.}
	By the previous substep, there exists a sequence $(m_i)_{i\geq1}$, such that up to extraction $m^i_n \to m^i$ as $n \to \infty$ for every $i \geq 1$. Besides, $m_i\ge0$ for every $i$ and by uniform summability,
	\[
	\sum_{i} m^i = m.
	\]
	We now build a generalized set of total mass $m$. Let $x^i_n$ be the center of $Q^i_n$. Up to further extraction, we assume that for every $i,j \geq 1$, $|x^i_n - x^j_n| \to d_{i,j} \in [0, \infty]$ as $n \to \infty$. Recall that $r_1$ is chosen so that \ref{trainvloc} holds, so that $\sup_n P(E_n - x^i_n) = \sup_n P(E_n) <  \infty$. Thus by the compactness  assumption \ref{com}, for every $i\geq $1, there exists $E^i$ such that $E_n - x^i_n \to E^i$ in $L^1_{\mathrm{loc}}$. 
	
	We now define an equivalence class in the set $\{1,2,\dots\}$ by setting
	\[
	i \sim j \quad \text{ if }\quad d_{i,j} < \infty.
	\] 
	Notice that if $i \sim j$, then $E^i$ and $E^j$ coincide up to a translation. We denote by $\mathcal{C}$ the set of all equivalence classes. For every equivalence class $c \in \mathcal{C}$ let $m_c = \sum_{i \in c} m_i$ so that
	\begin{equation}\label{sumofmass}
	\sum_{c \in \mathcal{C}} m_{c} = \sum_{i \geq 1} m_i = m. \medskip
	\end{equation}
	
	\textit{Step 1.3.}
	Let us fix $c \in \mathcal C$ and let us establish that $|E^i| = m_c$ for every $i \in c$. Given $\ell \geq 1$, by definition, there exists $R_\ell$ such that for all $n\geq 0$
	\[
	\bigcup_{1\le j\le \ell,\,j\in c} Q_n^j \subset B_{R_\ell}(x_n^i).
	\]
	Thus
	\begin{align*}
	\sum_{1\le j\le \ell,\, j\in c}m_n^j = \sum_{1\le j\le \ell,\, j\in c} |E_n \cap Q_n^j | = & \, \bigg|E_n \bigcap \Big( \bigcup_{\substack{1\le j\le \ell,\,j\in c}} Q_n^j \Big) \bigg|\\
	&\leq |E_n \cap B_{R_\ell}(x_n^i)| = |(E_n - x_n^i)\cap B_{R_\ell}|.
	\end{align*}
	
	\noindent	
	As $E_n - x_n^i \to E^i$ in $L^1_{\mathrm{loc}}$, taking $n \to \infty$ yields 
	\[
	\sum_{1\le j\le \ell,\,j\in c} m^{j} \leq |E^i \cap B_{R_\ell}| \leq |E^i|.
	\]
	Letting $\ell \to \infty$ we finally obtain
	\begin{equation}\label{oneineq}
	m_c \leq |E^i|.
	\end{equation}
	
	Let us prove the converse inequality. For this, thanks to \eqref{sumofmass} and \eqref{oneineq} it is sufficient to establish the inequality  
	\begin{equation}\label{decadix}
		\sum_{c\in\mathcal{C}} |E^{i_c}| \leq m,
		\end{equation}
where for each $c$ we select one  $i_c\in c$, for instance $i_c=\min\{j : j\in c\}$. Let us fix  $N\ge1$ and define  $\mathcal{C}_N=\{c\in\mathcal{C}:i_c\le N\}$, which is a finite subset of $\mathcal{C}$. Given $R>0$, by definition of the equivalence relation for $n$ large enough $|B_R(x_n^{i_c}) \cap B_R(x_n^{i_{c'}})| =0$ for $c,c'\in \mathcal{C}_N$ with $c \neq c'$. Hence
	\[
	m  = |E_n| \geq \lt|E_n \bigcap \bigcup_{c\in\mathcal{C}_N} B_R(x_n^{i_c})\rt| = \sum_{c\in\mathcal{C}_N} |E_n \cap B_R(x^{i_c}_n)| = \sum_{c\in\mathcal{C}_N} |(E_n - x_n^{i_c}) \cap B_R|.
	\]
	
	\noindent
	Passing to the limit in $n\to \infty$ yields 
	\[
	m \geq \sum_{c\in\mathcal{C}_N} |E^{i_c} \cap B_R|.
	\]
	Eventually, letting $R\to \infty$ and then $N\to\infty$ proves \eqref{decadix}.

	Consequently for $c\in\mathcal{C}$ and $i\in c$ we have $|E^i| = m_c$. Relabeling, we write $\{E^{i_c} :c\in \mathcal C\}=\{\widetilde E^1, \widetilde E^2,\widetilde E^3,\dots\}$ so that $(\widetilde{E}^i)_i$ is admissible for \eqref{infgen}. Given $i \ge 1$, we also denote $\tilde x_n^i=x_n^j$ where $j\ge1$ is such that $E^j=\widetilde{E}^i$.
	
	\medskip
	
	\textit{Step 2 : Lower semi-continuity of the energy.}
	
	\noindent
	We are left with the proof of 
	\begin{equation*}
	\mathcal{E}_{\mathrm{gen}}((\widetilde{E}^i)_i)\le \liminf_{n\to \infty} \mathcal{E}(E_n).
	\end{equation*}
	Keeping the notation of the previous step, we let $I\geq 1$ and consider the family $\tilde{x}^1_{n}, \cdots, \tilde{x}^I_{n}$. Note that if we let $R >0$, for $n$ large enough $\min_{i \neq j} |\tilde{x}_n^i-\tilde{x}_n^j| \geq 5R$.
	\medskip
	
	We start with the perimeter term. Using the periodicity assumption \ref{trainvloc} and then the set operations property \ref{Psetop}, we have
	\begin{equation*}
	\sum_{i=1}^I P(E_n-\tilde{x}^i_n, B_R) =  \sum_{i=1}^I P(E_n, B_R(\tilde{x}^i_n)) \leq P\Big(E_n, \bigcup_{i=1}^I B_R(\tilde{x}^i_n)\Big) \leq P(E_n).
	\end{equation*}
	
	\noindent
	 Recall that $E_n-\tilde{x}^i_{n} \to \widetilde{E}^i$ in $L^1_{\mathrm{loc}}$ as $n \to \infty$. Using the lower semicontinuity  and Beppo-Levi assumptions \ref{semconloc}\&\ref{bep} in that order, letting $n\to \infty$ and then $R \to \infty$ we obtain
	 \[
	 \sum_{i=1}^I P(\widetilde{E}^i ) \leq P(E_n).
	 \]
	Thus sending $I \to \infty$ yields
	\begin{equation}\label{sciP}
	\sum_{i\geq1} P(\widetilde{E}^i) \leq \liminf_n P(E_n).
	\end{equation}
	
	Let us turn to the perturbation term. Using the functions $\eta_1, \eta_2$ of the weak superadditivity assumption \ref{wksupadd}, we write
	\begin{align*}
	\sum_{i=1}^I V(E_n - \tilde{x}^i_n, B_R) &= \sum_{i=1}^I V(E_n, B_R(\tilde{x}^i_n))  \\
	&\leq V(E_n) + \eta_1\lt(\lt|E_n \setminus \bigcup_{i=1}^I B_R(\tilde{x}^i_n)\rt|\rt)+ \eta_2 \lt(\min_{i \neq j} |\tilde{x}^i_n - \tilde{x}^j_n|-2R\rt).
	\end{align*}
	Notice that 
	\[\lt|E_n \setminus \bigcup_{i=1}^I B_R(\tilde{x}^i_n)\rt| = |E_n| - \sum_{i=1}^I |(E_n - \tilde{x}^i_n) \cap B_R|.
	\]
	Recall that $\eta_2(r) \to 0$ as $r\to \infty$ and that $\eta_1$ is continuous. Letting $n\to \infty$ in the previous inequality and using \ref{semconloc} yields
	\[
	\sum_{i=1}^I V(\widetilde{E}^i, B_R) \leq \liminf_n \sum_{i=1}^I V(E_n - \tilde{x}^i_n, B_R) \leq \liminf_n V(E_n) + \eta_1 \lt(m - \sum_{i=1}^I |\widetilde{E}^i \cap B_R|\rt).
	\]
	Notice that by \eqref{sumofmass}, letting $R \to \infty$  and then $I \to \infty$ we have
	\[
	m - \sum_{i=1}^I |\widetilde{E}^i \cap B_R| \to 0.
	\]
	Therefore, using that $\eta_1(t) \to 0$ as $t\to 0$ and letting $R\to \infty$ and then $I \to \infty$ we obtain from \ref{bep} that
	\begin{equation*}
	\sum_{i=1}^{\infty} V(\widetilde{E}^i) \leq \liminf_n V(E_n).
	\end{equation*}
	Combining this inequality with \eqref{sciP} yields
	\[
	\mathcal{E}_{\mathrm{gen}}((\widetilde{E}^i)_i) = \sum_{i=1}^{\infty}\big[ P(\widetilde{E}^i) + V(\widetilde{E}^i)\big] \leq \liminf_n  \big[V(E_n) + P(E_n)\big] = e_{\mathrm{gen}}(m).
	\]
This proves that $(\widetilde{E}^i)_i$ is a generalized minimizer of \eqref{infgenmingen}.
\end{proof}

\section{Density estimates for perturbed isoperimetric problems}\label{densest}

\subsection{Density estimates for $\rho$-minimizers of the perimeter}~\

In this subsection, we establish Theorem \ref{densityestimates}, i.e. that $\rho$-minimizers admit density estimates under the set of hypotheses (S3). \medskip

\begin{proof}[Proof of Theorem~\ref{densityestimates}]~\
	
	Take $E$ as in the statement of Theorem \ref{densityestimates}, $\eps_1 > 0$ provided by \ref{cro} and for $x \in \R^d$, $r>0$, set $m(r) = |E \cap B_r(x)|$. We will show that there exists $r_0>0$ (depending on the functions and parameters $r, \eps$ appearing in the hypotheses) such that:
	\begin{equation}\label{campanatoin}
		\text{if for some } r \leq r_0, \ \frac{m(r)}{r^d} \leq \eps_1, \text{ then }  \frac{m(r/2)}{(r/2)^d} \leq \eps_1,
	\end{equation}
	and
	\begin{equation}\label{campanatoout}
		\text{if for some } r \leq r_0, \  \frac{m(r)}{r^d} \geq \eps_1, \text{ then } \frac{m(r/2)}{(r/2)^d} \geq \eps_1.
	\end{equation}
	Combining \eqref{campanatoin} and \eqref{campanatoout} and the definitions of $E^{(1)}$ and $E^{(0)}$ then yields \eqref{intdens} and \eqref{outdens}.\medskip

	We start by proving \eqref{campanatoin}: assume that $m(r) \leq \eps_1 r^d$ for some $r>0$ to be fixed later. Translation invariance does not necessarily hold, but up to a change of coordinates we may assume that $x=0$. Notice that $t \mapsto P(B_t, E)$ can not be strictly greater than its mean value over $[r/2, r]$ for any $t \in [r/2, r]$. Hence there exists $t \in [r/2, r]$ such that
	\begin{equation}\label{PBt}
	\frac{r}{2} P(B_t, E) \leq \int_{r/2}^{r} P(B_s, E)ds \leq \int_{0}^{r} P(B_s, E)ds \leq r f_2(r, m(r)),
	\end{equation}
	where the last inequality comes from \ref{intdif}. Up to multiplying $f_2$ by a constant, we omit the factor $1/2$ in what follows. Now, applying Definition \ref{quasimin} with $F = E \setminus B_t$ yields
	\[
	P(E) \leq P(E \setminus B_t) + \rho(t).
	\]
	By applying \ref{loccom} to $P(E \setminus B_t)$ and plugging it into the previous inequality we have
	\[
	P(E, B_t) \leq C P(B_t, E) + \rho(t).
	\]
	Then, using the monotonicity \ref{Psetop} of $U \mapsto P(E, U)$ and the one of $\rho$,
	\[
	P(E, B_{r/2}) \leq C P(B_t, E) + \rho(r).
	\]
	Together with \eqref{PBt}, we obtain (again replacing $C f_2$ by $f_2$)
	\begin{equation}\label{PBr2}
	P(E, B_{r/2})  \leq \overline{f}_2(r, m(r)),
	\end{equation}
	where 
	\[
	\overline{f}_2(r) = f_2(r, m(r)) + \rho(r).
	\]
	Recall that $m(r/2) \leq m(r) \leq  \eps_1 r^d$. Without loss of generality, we may assume that $\eps_1 \leq \omega_d/2^{d+1}$, which implies $m(r/2) \leq |B_{r/2}|/2$, so that in particular $m(r/2) \leq |B_{r/2} \setminus E|$. Combining the relative isoperimetric inequality \ref{reliso} and \eqref{PBr2} yields
	\[
	f_1(m(r/2)) \leq P(E, B_{r/2}) \leq \overline{f}_2(r, m(r))
	\]
	so that
	\[
	m(r/2) \leq \overline{f}_2(r, m(r)) \frac{m(r/2)}{f_1(m(r/2))} \leq \frac{\overline{f}_2(r, m(r))}{f_1(m(r))} \, m(r),
	\]
	where we used the fact that $m \mapsto f_1(m)/m$ is nonincreasing in the last inequality. By hypothesis, $m(r) \leq \eps_1 r^d$, and recalling that $f_3 = 2^d(f_2  + \rho )/f_1 = 2^d\overline{f}_2/f_1$ we obtain
	\begin{equation}\label{fracf5}
	\frac{m(r/2)}{(r/2)^d} \leq f_3(r, m(r)) \, \frac{m(r)}{r^d} \leq f_3(r, m(r)) \eps_1.
	\end{equation}
	By contradiction, assume that $m(r/2) > \eps_1 (r/2)^d $. Then $m(r) \geq m(r/2) > \eps_1(r/2)^d$. Thus by \ref{cro}, we have $f_3(r, m(r)) \leq 1$ and by \eqref{fracf5}, $m(r/2) \leq \eps_1 (r/2)^d$, which is absurd. Hence $m(r/2) \leq \eps_1 (r/2)^d$, proving \eqref{campanatoin}.
	
	\medskip
	
	To establish \eqref{campanatoout}, we define $m^c(r) = |E^c \cap B_r|$ and assume that $m^c(r) \leq \eps_1 r^d$ for some $r > 0$. Again, applying the mean value theorem to $r\mapsto P(B_r, E^c)$ yields the existence of $t \in [r/2, r]$ such that
	\[
	P(B_t, E^c) \leq f_2(r, m^c(r)).
	\]
	Next, comparing the $\rho$-minimizer $E$ with $F = E \cup B_t$ yields
	\begin{equation*}
		P(E) \leq P(E \cup B_t) + \rho(t).
	\end{equation*}
	Using \ref{loccom} to bound the local variations of the perimeter, we obtain
	\begin{equation*}
		C P(E^c, B_t ) \leq P(B_t, E^c) + \rho(t).
	\end{equation*}
	The proof of \eqref{campanatoout} is then exactly as the one of \eqref{campanatoin} with $E$ replaced by $E^c$.
\end{proof}

\smallskip

\subsection{Generalized minimizers as $\rho$-minimizers of the perimeter}~\

In this subsection, we prove Proposition \ref{mingentoquasimin}, which describes two cases where generalized minimizers of \eqref{infgen} are also $\rho$-minimizers of the perimeter for some function $\rho$.\medskip

\begin{proof}[Proof of Proposition~\ref{mingentoquasimin}]~\
	
We first observe that given $i \geq 1$, if $E^i$ is a component of a generalized minimizer of mass $|E^i| = m_i$, then it is a minimizer of \eqref{infclass} with the mass constraint $m=m_i$. We now show that $E^i$ is a $\rho$-minimizer of the perimeter and split the proof on whether hypothesis \ref{sca} or \ref{eps} holds. To ease the notation, we write $E = E^i$ and $ m = m_i$.\medskip

\textit{Case 1 : Scaling.}

\noindent
We first assume that $P$ and $V$ admit the scaling property given by \ref{sca}. Let us establish that for some $\Lambda \gg 1$, $E$ is a minimizer of
\begin{equation}\label{defElambda}
	 \inf_{E'} \lt \{\mathcal{E}_{\Lambda} (E') = \mathcal{E}(E') + \Lambda\big| m - |E'|\big| \rt\}.
\end{equation}
By contradiction, let us assume that there exists $\Lambda_n \to \infty$ and $(E_n)_{n \in \N}$ such that 
\begin{equation}\label{bycontradic}
\mathcal{E}_{\Lambda_n}(E_n) < \mathcal{E}(E).
\end{equation}
We first notice that we must have $|E_n| \not = m$. \medskip

\textbf{Step 1: Boundedness of $P$ and $V$.} Let us show that
\begin{equation}\label{boundedinEn}
	\sup_n P(E_n) < \infty \quad \text{ and } \quad \sup_n |V(E_n)| < \infty.
\end{equation}
By \eqref{addhypquantV} from \ref{sca}, for every $n \in \N$ we have
\begin{equation}\label{toapplyYoung}
V(E_n) \geq -C_1 |E_n|^{\delta}P(E_n)^{1-\delta}.
\end{equation}

\smallskip
If $\delta = 0$, then $C_1 < 1$ and by \eqref{bycontradic} 
\begin{equation*}
	P(E_n)(1-C_1) \leq P(E_n) + V(E_n) = \mathcal{E}(E_n) \leq \mathcal{E}_{\Lambda_n}(E_n) < \mathcal{E}(E) \leq \mathcal{E}(B_{\ell(m)}),
\end{equation*}
where $B_{\ell(m)}$ is the centred ball of volume $m$. Hence \eqref{boundedinEn} holds. \medskip

If $\delta \in (0,1]$, applying Young's inequality to \eqref{toapplyYoung} yields that for every $n \in \N$
\begin{equation}\label{consYoungineq}
V(E_n) \geq -C_1( \delta |E_n| + (1-\delta)P(E_n)), 
\end{equation}
so that
\begin{equation*}
(1- C_1(1-\delta)) P(E_n) \leq  \mathcal{E}(E_n) + \delta C_1|E_n|.
\end{equation*}
Thus for $\Lambda_n \geq \delta C_1$, by the triangle inequality
\begin{equation*}
	(1- C_1(1-\delta))P(E_n) \leq  \mathcal{E}(E_n) +\Lambda_n |m - |E_n|| + \delta C_1 m \leq  \mathcal{E}_{\Lambda_n}(E_n) + \delta C_1 m.
\end{equation*}
Therefore by \eqref{bycontradic} $\sup_n P(E_n) < \infty$. By \eqref{toapplyYoung}, up to relabelling $C_1$ we have 
\begin{equation}\label{estimV}
V(E_n) \geq -C_1 |E_n|^{\delta}.
\end{equation} 
Therefore to prove \eqref{boundedinEn} it is sufficient to establish that $\sup_n|E_n| < \infty$. By \eqref{consYoungineq} and replacing $C_1$ by $\max(1, C_1)$
\[
|E_n|(\Lambda_n - \delta C_1) \leq V(E_n) + C_1(1-\delta)P(E_n) + \Lambda_n |E_n| \leq C_1 \mathcal{E}_{\Lambda_n}(E_n) + C_1 \Lambda_n m,
\]
so that dividing by $\Lambda_n$ yields
\[
|E_n|(1 - \delta C_1 \Lambda_n^{-1}) \leq C_1 \Lambda_n^{-1} \mathcal{E}_{\Lambda_n}(E_n) + C_1 m.
\]
Hence $\sup_n |E_n| < \infty$ and \eqref{boundedinEn} holds.\medskip

\textbf{Step 2: Showing that $E$ minimizes $\mathcal{E}_{\Lambda}$.} We are now ready to prove \eqref{defElambda}. We set $t_n^d = m |E_n|^{-1}$ so that $|t_n E_n| = m$ and write $t_n = 1 + \eps_n$  where $\eps_n\in (-1, +\infty)$. Combining \eqref{bycontradic} and \eqref{boundedinEn} one has
\begin{equation*}
	\Lambda_n ||E_n|-m | \leq \mathcal{E}(E) - P(E_n) - V(E_n) \leq \mathcal{E}(B_{\ell{m}}) - V(E_n),
\end{equation*}
so that
\begin{equation*}
\sup_n 	\Lambda_n ||E_n|-m | \leq \sup_n  \lt[\mathcal{E}(B_{\ell{m}}) - V(E_n)\rt] = \mathcal{E}(B_{\ell(m)}) - \inf_n V(E_n) < \infty
\end{equation*}
As $\Lambda_n \to \infty$ as $n \to \infty$, the previous inequality implies that $|E_n| \to m $ and $\eps_n \to 0$ as $n \to \infty$. Now using the scaling part of \ref{sca}, by definition of $E$ we have
\begin{equation*}
	\mathcal{E}_{\Lambda_n} (E_n) < \mathcal{E}(E) \leq \mathcal{E}(t_n E_n) \leq t_n^{\alpha}P(E_n) + t_n^{\beta}V(E_n) = (1+\eps_n)^{\alpha}P(E_n) + (1+\eps_n)^{\beta}V(E_n).
\end{equation*}
Therefore, a Taylor expansion yields  that for some $C_3 = C_3(\alpha, \beta, \delta, \gamma, m)>0$
\begin{equation}\label{lambdam}
\Lambda_n \big|m - |E_n|\big| \leq |\eps_n|  \mathcal{E}(E_n) \leq |\eps_n| C_3 \mathcal{E}(B_{\ell(m)}) = |\eps_n| C_3.
\end{equation}
Finally, notice that by definition of $\eps_n$,
\[
\big|m - |E_n|\big| = m|1 - t_n^{-d}|= m |1-(1+ \eps_n)^{-d}| = m \eps_n +  O(\eps_n^2).
\]
Injecting this last equation into \eqref{lambdam}, we obtain
\[
\Lambda_n m |\eps_n| \leq |\eps_n| C_3
\] 
so that $\Lambda_n\leq C_3$, contradicting the fact that $\Lambda_n \to n$ as $n \to \infty$. We thus have that $E$ minimizes \eqref{defElambda} for $\Lambda \gg 1$.\medskip

\textbf{Step 3 : Conclusion.} We finally consider $E'$ with $E' \Delta E \subset B_r(x)$ for some $x \in \R^d$ and $r>0$. Let us consider $\Lambda \gg 1$ such that $E$ minimizes \eqref{defElambda}. We have 
\begin{equation*}
P(E) \leq P(E') + \lt[V(E') - V(E) \rt]+ \Lambda \big||E'| - m  \big| \leq P(E') + \lt[V(E') - V(E) \rt] + \Lambda  \omega_d r^d.
\end{equation*}
Using the local control \ref{loc} on $V$, for $r \ll1$ we have
\[
P(E) \leq P(E') + v(r) + \Lambda \omega_d r^d.
\]
Therefore $E$ is a quasi-minimizer of the perimeter. Its error function $\rho$ is $v$ or $r \mapsto \Lambda \omega_d r^d$, whichever is equivalent to $r \mapsto C r^{c}$ with the smallest possible $c>0$.\medskip

\textit{Case 2 : Local variation.}

\noindent
We now assume that \ref{eps} holds. Given $x \in \mathbb{R}^d$ and $0 < r \leq r_5/2$, we consider $E'$ such that $E' \Delta E \subset B_r(x)$. We notice that $\big| |E'|-|E|\big| \leq \omega_d r_5^d$ so that for $r_5$ small enough we may write $|E'| = |E| + \eps$ with $|\eps| \leq \eps_2$. By Definition \ref{volfix} (1), there exist  $x_0 \in \R^d$ and $F \subset \R^d$ such that $|F| = |E'|-2 \eps = |E| - \eps'$ and $E \Delta F \subset B_{r_5}(x_0)$ with $|B_{r_5}(x) \cap B_{r_5}(x_0)| = 0$ and
\begin{equation}\label{firstpartvolfix}
	\mathcal{E}(F) \leq \mathcal{E}(E) + g_1(\omega_d r^d). 
\end{equation}
We then define
\begin{equation*}
	F' = \big(F \cap B_{r}(x_0)\big) \cup \big(E \setminus (B_r(x) \cup B_{r}(x_0))\big)\cup \big(E' \cap B_r(x) \big)
\end{equation*}
and observe that $|F'| = |E|$, $F \Delta F' = E \Delta  E' \subset B_{r}(x)$ and $E \Delta F = E' \Delta F' \subset B_{r}(x_0) $. By Definition \ref{volfix} (2),
\begin{equation*}
	P(F') - P(F) \leq P(E') - P(E) +  g_2(r),
\end{equation*}
so that by minimality of $E$ and the locality assumption of $V$ \ref{loc}:
\[
\mathcal{E}(E) - \mathcal{E}(F) \leq \mathcal{E}(F') - \mathcal{E}(F) \leq P(E') - P(E) + g_2(r) + v_3(r).
\]
Injecting this inequality into \eqref{firstpartvolfix} yields
\begin{equation*}
	P(E) \leq P(E') + g_1(\omega_d r^d)+ g_2 (r) + v_3(r),
\end{equation*}
so that $E$ is a quasi-minimizer of the perimeter. Its error function $\rho$ is $g_1, g_2$ or $v$, whichever is equivalent to $r \mapsto C r^{c}$ with the smallest possible $c>0$.

\end{proof}

\section{Application to perturbed isoperimetric problems}\label{applications}

In this section, we consider three perturbed isoperimetric problems and investigate whether they satisfy the sets of hypotheses (S1) to (S4). We let the reader check that the hypotheses not mentioned in the proofs are indeed verified.

\subsection{An anisotropic liquid drop model}\	

\medskip
\noindent
Let $\phi$ and $G$ be as in Section \ref{sec_examples}. We consider for $E \subset \R^d$ and $U$ open,
\begin{equation*}
P_{\phi}(E, U) = \int_{(\partial^* E) \cap U} \phi(\nu_E(x)) d\mathcal{H}^{d-1}(x) \quad \text{and} \quad V_{G}(E, U) = \int_{(E \cap U) \times (E \cap U)} G(x-y) dxdy.
\end{equation*}

\begin{proposition}\label{PphiVG1}
The perimeter term $P_{\phi}$ and perturbation term $V_{G}$ satisfy (S1) to (S4).
\end{proposition}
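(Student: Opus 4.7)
The plan is to exploit systematically the two-sided comparison $C'_\phi |x| \leq \phi(x) \leq C_\phi |x|$, which makes $P_\phi$ equivalent up to constants to the classical perimeter $P$, together with the pointwise bound $G(y) \leq C_G |y|^{-\beta}$ obtained by applying the homogeneity assumption $G(tx) \leq t^{-\beta}G(x)$ with $x=y/|y|$, $t=|y|$ and using continuity of $G$ on $\mathbb{S}^{d-1}$. With these two facts, most of the hypotheses reduce to well-known properties of $P$ plus elementary estimates on Riesz-type kernels. For (S1): $P_\phi(B_r)\lesssim r^{d-1}$ and $V_G(B_r)\lesssim r^{2d-\beta}$ with $2d-\beta>0$ yield \ref{vanloc}; \ref{vaninf} uses the standard $L^1_\mathrm{loc}$-continuity of $P_\phi$ together with monotone convergence for the nonnegative $V_G$; and \ref{decker} follows from additivity of $P_\phi$ across supports separated by a positive distance, plus the cross-term estimate $\int_{E \times (F+Le_1)} G(x-y)\, dx dy \lesssim |E||F| L^{-\beta} \to 0$.

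For (S2), \ref{trainvloc} holds with any $r_1$ by exact translation invariance; \ref{Psetop}, \ref{com} (BV-compactness), \ref{semconloc} and \ref{bep} are classical for $P_\phi$ and come from monotone convergence for $V_G$; \ref{reliso} reduces via $P_\phi \geq C'_\phi P$ to the classical relative isoperimetric inequality, giving $f_1(m)= c\, m^{(d-1)/d}$ on the relevant range, and \ref{bdEbdP} is trivial since $V_G \geq 0$. The only potentially subtle hypothesis is \ref{wksupadd}, but it is immediate here: since the balls $(B^i)_i$ are disjoint, the sets $\{(E\cap B^i)\times (E\cap B^i)\}_i$ are pairwise disjoint in $\mathbb{R}^d\times\mathbb{R}^d$ and contained in $E\times E$, so $G\geq 0$ yields $\sum_i V_G(E,B^i) \leq V_G(E)$ and we may take $\eta_1\equiv\eta_2\equiv 0$.

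For (S3), \ref{loccom} follows from the a.e.\ decomposition $P_\phi(E)=P_\phi(E,B_r)+P_\phi(E,B_r^c)$ (valid whenever $\mathcal{H}^{d-1}(\partial^* E \cap \partial B_r)=0$), the standard inclusions of reduced boundaries of $E\setminus B_r$ and $E\cup B_r$, and $C'_\phi P \leq P_\phi \leq C_\phi P$, yielding $C=C_\phi/C'_\phi$. For \ref{intdif}, coarea gives $\int_0^r \mathcal{H}^{d-1}(E\cap \partial B_s(x))\, ds = |E\cap B_r(x)|$, so $f_2(r,m)=C_\phi m/r$. On the range $\eps_1 r^d/2^d < m \leq \eps_1 r^d$ this yields $f_2(r,m)/f_1(m) \sim \eps_1^{1/d}$, and since Proposition \ref{mingentoquasimin} below delivers an error function $\rho$ of order $r^d$, the additional term $\rho(r)/f_1(m) \sim r/\eps_1^{(d-1)/d}$ is also small; choosing $\eps_1$ small and then $r_3$ small produces $f_3(r,m)\leq 1$, proving \ref{cro}.

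Finally for (S4), a change of variables gives $P_\phi(tE)=t^{d-1}P_\phi(E)$ and $V_G(tE)\leq t^{2d-\beta} V_G(E)$ (using the homogeneity of $G$), which is \ref{sca} with scaling exponents $d-1$ and $2d-\beta$; the lower bound \eqref{addhypquantV} is trivial since $V_G \geq 0$. For \ref{loc}, the estimate $|V_G(E)-V_G(E')| \leq 2\int_{E\Delta E'}\int_{E\cup E'} G(x-y)\, dy\, dx$ combined with $G(y)\leq C|y|^{-\beta}$ and the radial rearrangement bound $\int_A |x-y|^{-\beta}\, dy \leq C|A|^{(d-\beta)/d}$ gives $v(r)=C(m)\, r^d$. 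The step I expect to be the main obstacle is \ref{eps}: the first part of Definition \ref{volfix} is an Almgren-type volume-fixing lemma requiring, for every small $\eps$, a perturbation $F$ supported in a ball $B_{r_5}(x_0)$ disjoint from $B_{r_5}(x)$ with $|F|-|E|=\eps$ and $\mathcal{E}(F) \leq \mathcal{E}(E) + g_1(|\eps|)$; for $P_\phi$ this is classical via a smooth one-parameter family of diffeomorphisms localized in $B_{r_5}(x_0)$ together with the first variation formula for the anisotropic perimeter, and for $V_G$ it suffices to check that $E \mapsto V_G(E)$ is locally Lipschitz in $L^1$ along this family, which follows from \ref{loc} already proved. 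The second part of Definition \ref{volfix} then holds with $g_2=0$ by locality of $P_\phi$.
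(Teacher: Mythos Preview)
Your proposal is correct and follows essentially the same route as the paper: reduce $P_\phi$ to the classical perimeter via the two-sided bound, use the pointwise control $G(y)\le C|y|^{-\beta}$ for the Riesz-type term, and verify \ref{wksupadd} with $\eta_1=\eta_2=0$ by disjointness; for (S3) you obtain the same $f_1(m)=c\,m^{(d-1)/d}$, $f_2(r,m)=C m/r$, and $\rho(r)=Cr^d$, and conclude \ref{cro} exactly as the paper does. For \ref{loc} your rearrangement bound $\int_A|x-y|^{-\beta}\,dy\le C|A|^{(d-\beta)/d}$ is a clean alternative to the paper's near/far splitting at a fixed scale $R$; both give $v(r)=C(m)r^d$.

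One caution: you go further than the paper by also sketching \ref{eps}. This is unnecessary here, since \ref{sca} already holds (with exponents $d-1$ and $2d-\beta$, and \eqref{addhypquantV} trivial because $V_G\ge 0$), and that is all (S4) requires in the sense of Proposition~\ref{mingentoquasimin}. More importantly, your argument for the first point of Definition~\ref{volfix} invokes ``the first variation formula for the anisotropic perimeter'', but under the standing assumptions $\phi$ is only convex, one-homogeneous and coercive, not $C^1$ on $\mathbb{S}^{d-1}$; as the paper itself points out in a later remark, establishing \ref{eps} for $P_\phi$ genuinely needs the extra hypothesis $\phi\in C^1(\mathbb{S}^{d-1})$. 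So that paragraph would not go through as written --- but since it is not needed, simply drop it.
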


\begin{proof}

Regarding $(S2)$, the isoperimetric inequality  \ref{reliso} and compactness property \ref{com} for $P_{\phi}$ are implied by the fact that $C_{\phi}' \, \mathrm{Per} \leq P_{\phi} \leq C_{\phi} \, \mathrm{Per}$. We can thus take $f_1(m) = C_1 m^{(d-1)/d}$. As for the weak superadditivity \ref{wksupadd} of $V_{G}$, let us denote $V_{G}(E) = L_{G}(E,E)$ where $L_{G}(E,F) = \int_{E\times F} G(x-y)dxdy$ and given some $x^i \in \R^d$ we set $B_i = B_R(x^i)$. We compute
\begin{equation}\label{wksu}
L_{G}(E \cap \cup_i B_i, E \cap \cup_i B_i) = \sum_{i=1}^I L_{G}(E \cap B_i, E \cap B_i)+ \sum_{i \not = j} L_G(E \cap B_i, E \cap B_j).
\end{equation}
Thus
\[
V_{G}(E) \geq V_{G}(E, \cup_i B_i) = \sum_{i=1}^I V_{G}(E, B_i) + \sum_{i \neq j} L_G(E \cap B_i, E \cap B_j)
\]
and we conclude that \ref{wksupadd} holds with $\eta_1 = \eta_2 = 0$. \medskip

We now prove (S4) and finish with (S3). The scaling hypothesis \ref{sca} is verified by $P_{\phi}$ and $V_{G}$ by hypothesis on $\phi$ and $G$. Regarding \ref{loc}, first notice that by hypothesis on $G$
\begin{equation}\label{hyponG1}
	G(x) \leq  \frac{C}{|x|^{\beta}} \quad \text{for any } x \in \R^d, \text{ where } C = \sup \lt \{G(x) : x \in \mathbb{S}^{d-1} \rt\}.
\end{equation}
Now given $E, E' \subset \R^d$ such that $E \Delta E' \subset B_r(x)$ for some $x\in \R^d$ and $r>0$, we compute
\begin{align*}
	V_{G}(E) - V_{G}(E') &= L_{G}(E, E) - L_{G}(E',E')\\
	&= L_{G}(E \setminus E', E) + L_{G}(E\cap E', E) - L_{G}(E'\setminus E, E') - L_{G}(E' \cap E, E')\\
	&\leq L_G(E \setminus E', E) + L_{G}(E \cap E',  E \setminus E').
\end{align*}
Thus by symmetry of the roles of $E$ and $E'$, and defining $E_{\Delta} = E \Delta E'$ and $E_{\cup} = E \cup E'$,
\begin{equation*}
	\lt| V_{G}(E)-V_{G}(E') \rt| \leq L_{G}(E \cap E', E \Delta E') + L_{G}(E \Delta E', E \cup E')
	\leq C \int_{E_{\Delta} \times E_{\cup}} \frac{dxdy}{|x-y|^{\beta}},
\end{equation*}
where the last inequality is a consequence of \eqref{hyponG1}. By \eqref{hyponG1},  there also exists $R = R(G) >0$ such that $G\leq 1$ outside of $B_R$. As $\beta\in(0,d)$,  \eqref{hyponG1} implies that $G$ is integrable on $B_R$. We define for $R >0$the set $ S_R = \{(x,y) \in \R^d \times \R^d, \, |x-y| < R\}$ and we have
\begin{align*}
\int_{E_{\Delta} \times E_{\cup}} \frac{dxdy}{|x-y|^{\beta}} &\leq \int_{(E_{\Delta} \times E_{\cup}) \cap S_R} G(x-y) dydx + \int_{(E_{\Delta} \times E_{\cup}) \cap S_R^c} G(x-y) dydx\\
&\leq \int_{E \Delta E'} \int_{B_R} G(z) dz + |E \Delta E'| |E \cup E'| = C(G, m) |E \Delta E'|,
\end{align*}
so that \ref{loc} holds with $v(r) = Cr^d$. Thus by Proposition \ref{mingentoquasimin}, generalized minimizers of $P_{\phi} + V_{G}$ are quasi-minimizers of the perimeter with error function $\rho(r) = C r^d$.

As for $(S3)$, the fact that $P_{\phi}$ satisfies \ref{loccom} on local comparisons is classical and a consequence of \cite[Theorem 16.3]{Maggi12}. Thanks to the fact that $P_{\phi} \leq C_{\phi} \mathrm{Per}$,  the integral inequality \ref{intdif} is verified with $f_2(r, m) = C_3 m/r$. Finally, regarding the density scale factor assumption \ref{cro}, recall that
\[
f_1(m) = C_1 m^{\frac{d-1}{d}}.
\] 
Therefore, up to replacing $f_3$ by $Cf_3$ we have
\[
f_3(r, m) = \frac{f_2(r, m) + \rho(r)}{f_1(m)} =  \frac{m^{\frac{1}{d}}}{r} + r \Big(\frac{m}{r^d} \Big)^{\frac{1-d}{d}}= \Big(\frac{m}{r^d}\Big)^{\frac{1}{d}}  + r \Big(\frac{m}{r^d} \Big)^{\frac{1-d}{d}} .
\]
Taking for instance $r_3 = 2^{-d}$ and $\eps_1 = 2^{-d}$, we obtain that if $r \leq r_3$ and $m$ is such that $\eps_1 2^{-d} \leq m r^{-d} \leq \eps_1$, then $f_3(r, m) \leq 1$. Therefore, \ref{cro} holds.
\end{proof}

\begin{remark}
	Another possible nonlocal kernel is of the form
	\[
	V_{K}(E, U) = -P_K(E, U) =- \int_{E \cap U \times E^c} K(x-y) \, dxdy,
	\]
	so that the corresponding isoperimetric problem $P - P_K$ may be seen as the difference between a (local or nonlocal) perimeter and a nonlocal perimeter. Notice that if $K \in L^1(\R^d)$ we may write
	\[
	V_{K}(E, U) = \int_{(E \cap U) \times E} K(x-y) \, dxdy - |E \cap U| \|K \|_{L^1(\R^d)},
	\]
	so that the analysis of this case is exactly as in the case $V = V_{G}$ studied above. However in the recent works \cite{DiCNovRufVal, GolMerPeg22, MelWu22} the considered problem is
	\[
	\omega_{d-1} \textrm{Per}(\cdot) - (1-s) P_s(\cdot) \quad \text{or} \quad (1-t)P_t(\cdot) - (1-s)P_s(\cdot)
	\]	
	with $0 < s < t <1$, so that the assumption $V_{K} \in L^1(\R^d)$ cannot be used. While various sets of hypotheses on $K$ are used in the aforementioned articles, proving that (S1) and (S2) hold is similar to the case $V = V_{G}$. To obtain density estimates however, the known approaches revolve around showing that there exists $0 < s_0<1$ such that for $E \Delta E' \subset B_r(x)$,
	\[
	\big| V_{K}(E) - V_{K}(E') \big| \leq C(K) |E \Delta E'|^{1-s_0} \mathrm{Per}(E \Delta  E')^{s_0}.
	\]
	The dependency on the perimeter of $E\Delta E'$ appearing in the bound on the local variations of $V$ then prevents one from establishing \ref{loc}. Thus (S4) does not hold even though assumption \ref{sca} is verified. This problem in turn prevents us from establishing \ref{cro}, so that (S3) does not hold even though \ref{loccom} and \ref{intdif} of (S3) are verified. Usually, one has to first show density estimates for the perimeter before establishing volume density estimates, which is outside the framework of this paper.
\end{remark}

\subsection{A prescribed nonlocal curvature problem}\label{genpres}\

\medskip

\noindent
Let $K$ and $T$ be as in Section \ref{sec_examples} and fix $s\in(0,1)$. In particular, recall that $T$ is $L$-periodic for some $L >0$ and Lipschitz continuous. Given $E, U \subset \R^d$ we consider
\begin{equation*}
	P_K(E) = \int_{(E\cap U) \times E^c} K(x-y) dxdy \quad \text{and} \quad V_T(E, U) = -\int_{E \cap U} T(x) \, dx.
\end{equation*}

\begin{proposition}\label{PKVT}
	The perimeter term $P_{K}$ and perturbation term $V_T$ satisfy (S1) to (S4).
\end{proposition}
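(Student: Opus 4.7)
The strategy is to verify the four sets of hypotheses in turn, using two key structural observations: $V_T$ is bounded and local, with $|V_T(E)| \leq \|T\|_\infty |E|$ and $V_T(E,U)$ depending only on $E \cap U$, while $P_K$ is comparable to the fractional perimeter $P_s$ via the two-sided bound $C'_K |z|^{-d-s} \leq K(z) \leq C_K |z|^{-d-s}$. For (S1) we use $P_K(B_r) \leq C r^{d-s}$ and $|V_T(B_r)| \leq \|T\|_\infty \omega_d r^d$ to obtain \ref{vanloc}, together with Fubini and the decay of $K$ for \ref{vaninf} and \ref{decker}. For (S2), the relative isoperimetric inequality \ref{reliso} with $f_1(m)=c\, m^{(d-s)/d}$ follows by the comparison $P_K \geq C'_K P_s$ and the known relative fractional isoperimetric inequality; periodicity \ref{trainvloc} is obtained by choosing $r_1 = L/N$ with $N$ large enough that $r_1 \leq 2 r_0 / \sqrt{d}$; compactness \ref{com} reduces to $W^{s,1}_{\mathrm{loc}} \hookrightarrow L^1_{\mathrm{loc}}$; \ref{bdEbdP} uses $|V_T(E_n)| \leq \|T\|_\infty |E_n|$; and the weak superadditivity \ref{wksupadd} is trivial for $V_T$, which is countably additive in $E\cap U$, yielding $\eta_1(t) = \|T\|_\infty t$ and $\eta_2 \equiv 0$.

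For (S4) I would verify \ref{eps} rather than \ref{sca}, since neither the non-homogeneous kernel $K$ nor the non-invariant potential $T$ admits a clean homogeneity relation. The local-perturbation control \ref{loc} is immediate: $|V_T(E) - V_T(E')| \leq \|T\|_\infty |E \Delta E'| \leq C r^d$, so $v(r) = C r^d$. The first clause of Definition~\ref{volfix} is a nonlocal Almgren-type lemma: one selects a ball $B_{r_5}(x_0)$ along $\partial^* E$, far from the variation ball, where the boundary admits a deformation changing the volume by $\eps$ at nonlocal perimeter cost $g_1(|\eps|) \leq C |\eps|$. The second clause is the crux of the verification for $P_K$: one must show that combined modifications in disjoint balls $B_r(x)$ and $B_{r_5}(x_0)$ decouple up to lower order. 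Expanding $P_K(F') - P_K(F)$ via the defining integral, the discrepancy with $P_K(E') - P_K(E)$ reduces to interaction integrals $\int_{A \times B} K(x-y)\, dxdy$ with $A \subset B_r(x)$ and $B \subset B_{r_5}(x_0)$; since the two balls are separated by a positive distance, this is bounded by $C |A| |B| \leq C' r^d$, yielding $g_2(r) \leq C r^d$. Proposition~\ref{mingentoquasimin} then provides an error function $\rho(r) = C r^d$.

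For (S3), the splitting $P_K(E) = P_K(E,B_r) + P_K(E,B_r^c)$ in \ref{loccom} is immediate from the defining integral; the two comparison inequalities follow from the identity $P_K(E \setminus B_r) = P_K(E,B_r^c) + L_K(E \cap B_r, E \cap B_r^c)$ with $L_K(A,B) = \int_{A\times B} K$, combined with the bound $L_K(E \cap B_r, E \cap B_r^c) \leq L_K(E \cap B_r, B_r^c) = P_K(B_r, E)$; a symmetric computation handles $P_K(E \cup B_r)$, both with $C = 1$. The integral inequality \ref{intdif} follows from Fubini and the elementary estimate $\int_{B_s^c} K(x-y)\,dy \leq C(s - |x-x_0|)^{-s}$ for $x \in B_s(x_0)$, giving $f_2(r,m) = C m r^{-s}$. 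Finally, with $\rho(r) = C r^d$, $f_1(m) = c m^{(d-s)/d}$ and $f_2(r,m) = C m r^{-s}$, the density scale factor satisfies $f_3(r,m) \leq C \eps_1^{s/d} + C r^s \eps_1^{-(d-s)/d}$ on the range $\eps_1 r^d / 2^d \leq m \leq \eps_1 r^d$; choosing first $\eps_1$ and then $r_3$ small makes $f_3 \leq 1$. The main expected obstacle is the second clause of Definition~\ref{volfix}: while the classical and anisotropic perimeters decouple perfectly under disjoint-ball modifications (with $g_2 \equiv 0$), the nonlocal $P_K$ demands careful algebraic bookkeeping of the cross interactions and a quantitative decay estimate via the distance between the two modification balls.
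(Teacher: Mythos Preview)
Your approach matches the paper's, and most of the verifications are correct. Two points need attention.

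First, your treatment of \ref{trainvloc} contains an error: $L$-periodicity of $T$ does \emph{not} imply $L/N$-periodicity for $N>1$, so you cannot take $r_1 = L/N$. The paper instead keeps $r_1 = L$ and enlarges $r_0$: the relative fractional isoperimetric inequality holds for any $r_0>0$ (with a constant $C=C(r_0,d,s)$), so one may simply set $r_0 = \sqrt{d}\,L/2$, which makes the constraint $r_1 \leq 2 r_0/\sqrt{d}$ hold.

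Second, your sketch of Definition~\ref{volfix}(1) omits the key technical step. The paper constructs the volume-fixing deformation via $\Phi_t(x) = x + t S(x)$ with $S \in C^1_c(B_{r_5}(x_0))$ and $\int_E \mathrm{div}\,S > 0$, and then Taylor-expands $P_K(\Phi_t(E))$ in $t$. This expansion relies explicitly on the hypotheses $K \in W^{1,1}_{\mathrm{loc}}(\R^d \setminus \{0\})$ and $|\nabla K(x)| \leq |x|^{-(d+s+1)}$, which you never invoke; without them the estimate $|P_K(\Phi_t(E)) - P_K(E)| \leq C|t|\, P_K(E)$ is not available. The Lipschitz continuity of $T$ plays the analogous role for $V_T(\Phi_t(E))$. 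These are precisely the extra assumptions on $K$ and $T$ listed in Section~\ref{sec_examples}, and this is the one place in the proof where they are essential. You flag clause~(2) of Definition~\ref{volfix} as the main obstacle, but for a nonlocal perimeter clause~(1) is at least as delicate.

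A minor point on \ref{loccom}: your claim that $C=1$ suffices in the comparison inequalities tacitly uses $K(-z)=K(z)$, which the paper does not assume; with only the two-sided bound $C'_K|z|^{-d-s}\leq K(z)\leq C_K|z|^{-d-s}$ one obtains these inequalities with $C = C_K/C'_K$.
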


\begin{proof}
Regarding (S2), recall that by hypothesis $C'_K P_s \leq P_K \leq C_K P_s$. It is thus enough to prove that the relative isoperimetric inequality holds for $P_s$. We first observe that that for any $E \subset \R^d$, $r>0$ and $x\in \R^d$, writing by abuse of notation $B_r = B_r(x)$
\begin{equation}\label{poinca}
2 P(E, B_r) \geq 2\int_{(E\cap B_r) \times (E^c \cap B_r)} \frac{dxdy}{|x-y|^{d+s}} = \int_{B_r\times B_r} \frac{|\chi_E(x) - \chi_E(y)|^2}{|x-y|^{d+s}}.
\end{equation}
Then if $|E\cap B_r| \leq |B_r|/2$, we proceed exactly as in the proof of \cite[Lemma 2.5]{DiCNovRufVal}: by a Poincaré-type inequality for fractional Sobolev spaces, we obtain that for any $r_0>0$, there exists $C = C(r_0, d,s)$ such that for any $r \leq r_0$,
\[
P(E, B_r) \geq C |E \cap B_r|^{\frac{d-s}{d}}.
\]
Conversely, if $|E^c \cap B_r| \leq |B_r|/2$, we can proceed as in the preceding case because the roles of $E$ and $E^c$ are symmetrical in \eqref{poinca}. Eventually, we have that the relative isoperimetric inequality \ref{reliso} holds for
\[
f_1(m) = C_1 m^{\frac{d-s}{d}}.
\]
The compactness property \ref{com} also holds because of the embedding theorems for fractional Sobolev spaces. The periodicity assumption \ref{trainvloc} holds for $V_T$ because $T$ is $L$-periodic and the constraint $L = r_1 \leq 2 r_0/\sqrt{d}$ is verified by setting $r_0 = \sqrt{d} L/2$. Let us prove \ref{bdEbdP} on the boundedness of the perimeter by contraposition. We have $V_T(E) \geq -\|T\|_{\infty} |E|$, so that if there exists $(E_n)_n$ with $\sup_n P_K(E_n) = \infty$ and $\sup_n |E_n| =C < \infty$, then
\[
\mathcal{E}(E_n) =  P_{K}(E_n) + V_T(E_n) \geq P_K(E_n) - \|T\|_{\infty} |E_n|.
\]
Hence $\sup_n\mathcal{E}(E_n) = \infty$ and \ref{bdEbdP} is proved. 
\medskip 

Regarding (S4), as $P_K$ and $V_T$ have no scaling property, we have to establish that \ref{eps} on volume-fixing variations holds. Regarding Definition \ref{volfix} (1), we proceed as in \cite[Lemma 3.1]{CesaNov17}. Given a minimizer $E$ of \eqref{infclass},  we have that $|E| <  \infty$ and $P_K(E) < \infty$. We consider $E' \subset \R^d$ such that $E \Delta E' \subset B_r(x)$ for $r \leq r_5$ and $x \in \R^d$. Let us show that there exists $x_0 \neq x \in \R^d$ such that
\begin{equation}\label{xzeroborder}
	|E \cap B_{r_5}(x_0)| >0, \quad |E^c \cap B_{r_5}(x_0)| >0 \quad \text{and} \quad |B_{r_5}(x) \cap B_{r_5} (x_0)| = 0.
\end{equation}
Up to reducing $r_5$, we may assume that $\omega_d r_5^d \leq |E|/4$. Thus there exists $\theta \in \mathbb{S}^{d-1}$ such that $x_{\theta} = x + 2 r_5 \theta \in E$. Additionally, $|B_{r_5}(y) \cap B_{r_5}(x)| = 0$. Finally, there also exists $r_0 \geq 2 r_5$ such that for $x_0= x + r_0 \theta$, we have $x_0 \in E$ and $|B_{r_5}(x_0) \cap E^c |\neq 0$. Thus \eqref{xzeroborder} holds.

Hence by the relative isoperimetric inequality we have
\begin{equation}\label{defperim}
\mathrm{Per}(E, B(x_0, r_5)) = \sup \lt\{\int_E \mathrm{div}S(x) \, dx : S \in C^1_c(B(x_0, r_5), \R^d), \, \|S\|_{\infty} \leq 1 \rt\} >0.
\end{equation}
Hence there exists $S \in C^1_c(B(x_0, r_5), \R^d)$ such that $M = \small \int_E \mathrm{div}S(x) \, dx >0$.

Let us now define for $t \in (-1,1)$ the maps $\Phi_t(x) = x +t S(x)$. By a changing of variables, we compute
\begin{equation}\label{changevarPk}
P_K(\Phi_t(E)) = \int_{E\times E^c} K(\Phi_t(x) - \Phi_t(y)) \lt(1 + t \mathrm{div} S(x) + t \mathrm{div}S(y) + o(t) \rt)dxdy.
\end{equation}
By hypothesis on $K$ we may write that for any $x,y \in \R^d$ and $t \in (-1,1)$
\begin{equation*}
	 K(\Phi_t(x) - \Phi_t(y)) - K (x-y) = t\int_0^1 \nabla K\lt[x-y + ut(S(x)-S(y))\rt]\cdot (S(x)-S(y))du.
\end{equation*}
Combining the regularity of $S$ with the fact that $|\nabla K |\leq |\cdot|^{-(d+s+1)}$ yields that for some $C = C(S, K)$
\begin{equation*}\label{TaylorK}
\int_{E\times E^c}|K(\Phi_t(x) - \Phi_t(y)) - K (x-y)| dxdy \leq C |t| \int_{E\times E^c}\frac{dxdy}{|x-y|^{d+s}} \leq C |t|P_K(E).
\end{equation*}
where we used the fact that $C'_K |\cdot|^{-(d+s)} \leq K(\cdot)$ in the last inequality. Reinjecting this into \eqref{changevarPk} we obtain
\begin{equation}\label{firstvarPK}
(1- C|t|)P_K(E) \leq P_K(\Phi_t(E)) \leq (1+ C |t|)P_K(E)
\end{equation}
for some $C= C(S,K)$. We proceed similarly for the perturbative term, writing
\begin{equation*}
V_T(\Phi_t(E)) = - \int_{\Phi_t(E)} T(x) dx = -\int_E T(x + S(x)) (1 + t \mathrm{div}S(x) + o(t)) dx.
\end{equation*}
Using the Lipschitz continuity of $T$ and \eqref{firstvarPK} we obtain that for some $C = C(S,K)$:
\begin{equation*}
	\mathcal{E}(\Phi_t(E)) = \mathcal{E}(E) + C t + o(|t|).
\end{equation*}
Finally, notice that
\[
|\Phi_t(E)| = \int_{E} \lt(1 + t \, \mathrm{div}\, S(x) + o(t) \rt)dx = |E| + M t + o(t).
\]
Thus for $|\eps| \ll 1$ we can find $t(\eps) = \eps/M + o(|\eps|)$ such that $F = \Phi_{t(\eps)}(E)$ satisfies $|F| = |E| + \eps$, $E \Delta F \subset B_{r_5}(x_0)$ and
\[
\mathcal{E}(F) \leq \mathcal{E}(E) + C |\eps|,
\]
so that Definition \ref{volfix} (1) holds for $g_1(|\eps|) = C |\eps|$. 

Regarding Definition \ref{volfix} (2), let us consider $E, E', F, F' \subset \R^d$ and $r_5 >0$ such that $E \Delta E'  = F \Delta F' \subset B_{r}(x)$ for some $r \leq r_5$ and $E\Delta F = E' \Delta F' \subset B_{r_5}(x_0)$ with $B_{r_5}(x)$ and $B_{r_5}(x_0)$ disjoint. Recall that we want an estimate of the form
\begin{equation}\label{recallest}
\Delta P_K = P_K(E) - P_K(E') - (P_K(F) - P_K(F')) \leq  g_2(r),
\end{equation}
for some nondecreasing function $g_2 : \R_+ \to \R_+$. Up to a change of coordinates we assume that $x=0$ and given $A, B \subset \R^d$ we define
\[
L(A,B) =  \int_{A\times B} K(x-y) dxdy.
\]
We first decompose $E$ over $B_r, B_{r_5}(x_0)$ and $H = B_r^c \cap B_{r_0}^c$ and obtain
\begin{equation*}
L(E, E^c) = L(E \cap B_r, E^c) + L(E \cap B_{r_5}(x_0), E^c) + L(E  \cap H, E^c \cap B_r) + L(E \cap H, E^c \cap B_r^c).
\end{equation*}
We write the same formula for $L(E', E'^c)$, and thus obtain for $\Delta P_{E,F} =  P_K(E) - P_K(F)$  that
\begin{equation*}
\begin{split}
\Delta P_{E,F} \,=\, & L(E \cap B_r, B_{r_5}(x_0) \cap E^c) - L(E \cap B_r, B_{r_5}(x_0) \cap F^c)\\
&+ L(E \cap B_{r_5}(x_0), E^c) - L (F \cap B_{r_5}(x_0), F^c)\\
&+ L(E\cap H, E^c \cap B_r^c) - L(E \cap H, F^c \cap B_r^c).
\end{split}
\end{equation*}
The corresponding expression for $\Delta P_{E',F'} =  P_K(E') - P_K(F')$ is
\begin{equation*}
\begin{split}
\Delta P_{E',F'} \,= \,& L(E' \cap B_r, E'^c \cap  B_{r_5}(x_0)) - L(E' \cap B_r, F'^c \cap  B_{r_5}(x_0) )\\
&+ L(E' \cap B_{r_5}(x_0), E'^c) - L (F' \cap B_{r_5}(x_0), F'^c)\\
&+ L(E' \cap H, E'^c \cap B_r^c) - L(E'\cap H, F'^c \cap  B_r^c).
\end{split}
\end{equation*}
Notice that $E \cap H = E' \cap H$, that $E^c \cap B_r^c = E'^c \cap B_r^c$ and that $F^c \cap B_r^c = F'^c \cap B_r^c$. Hence the last lines of $\Delta P_{E,F}$ and $\Delta P_{E',F'}$ will cancel each other out in the difference $\Delta P_K = \Delta P_{E,F} - \Delta P_{E',F'}$. Therefore
\begin{equation*}
\begin{split}
\Delta P_K \,=\, &L(E \cap B_r, E^c \cap B_{r_5}(x_0)) - L(E \cap B_r, F^c \cap B_{r_5}(x_0))\\
&+ L(E' \cap B_r, F'^c \cap B_{r_5}(x_0))- L(E' \cap B_r, E'^c \cap B_{r_5}(x_0))\\
&+ L(E \cap B_{r_5}(x_0), E^c \cap B_r) - L(E \cap B_{r_5}(x_0), E'^c \cap B_r)\\
&+ L(F \cap B_{r_5}(x_0), E'^c \cap B_r) - L(F \cap B_{r_5}(x_0), E^c \cap B_r).
\end{split}
\end{equation*}
Notice that the right-hand side of the previous equality is a sum of terms of the form $L_K(A, B)$, with either $A \subset B_r,$ and $B \subset B_{r_5}(x_0)$ or vice versa. In both cases, there exists $C = C(K, r_5) > 0$ such that $\inf \{|x-y| : (x,y) \in A \times B \} \geq C$. Also recalling that $K(\cdot) \leq C_K |\cdot|^{-(d+s)}$, up to relabelling $C_K$ we have
\begin{equation}\label{intint}
L_K(A, B) \leq C\iint_{B_r \times B_{r_5}(x_0)} \frac{dzdy}{|z-y|^{d+s}} \leq C |B_r||B_{r_5}(x_0)| = C r^d
\end{equation}
Therefore $\Delta P_K \leq C r^{d}$, so that \eqref{recallest} holds with $g_2(r) = C r^{d}$.

Finally, \ref{loc} on the local Lipschitz continuity of $V$ is verified with $v(r) = \|T\|_{\infty} \omega_d r^d$. Thus by Proposition \ref{mingentoquasimin}, there exists $C = C(g_1, g_2, v)$ such that for $r \ll 1$, generalized minimizers of \eqref{infgen} are $\rho$-minimizers for $\rho(r) =  C r^{d}$. \medskip
 
We conclude with $(S3)$. Regarding the integral inequality \ref{intdif}, let $E\subset \R^d$ and $x \in \R^d$. Up to a translation, we assume that $x=0$. Now given $z \in B_u$, we write
\[
\int_{B_u^c} \frac{dy}{|z-y|^s} \leq \int_{B_u^c} \frac{dy}{(|y|-|z|)^s} \leq C \int_{u-|z|}^{\infty} \frac{v^{d-1}dv}{v^{d+s}} = \frac{C}{(u-|z|)^s}.
\]

Hence
\begin{equation*}
 P_K(B_u, E) = \int_{(E \cap B_u) \times B_u^c }\frac{dzdy}{|z-y|^s} \leq C \int_{E \cap B_u} \frac{dz}{s(u-|z|)^{s}}= C\int_{0}^u \frac{\mathcal{H}^{d-1}(E \cap B_{v})}{(u-v)^s} \, dv.
\end{equation*}
Hence by Fubini-Tonelli
\[
\begin{split}
\frac{1}{r}\int_0^r P_K(B_u, E) \, du
&=\frac{C}{r}\int_0^r \int_v^r \frac{\mathcal{H}^{d-1}(E \cap B_{v})}{(u-v)^s} \, du dv \\
&\leq \frac{C r^{1-s}}{r} \int_{0}^r \mathcal{H}^{d-1}(E \cap B_{v}) \,dv
= \frac{C}{r^s} \, |E \cap B_r|,
\end{split}
\]
and one can take $f_2(r,m) = C_3 r^{-s} m$. We finish by establishing \ref{cro} on the density scale factor. Recall that 
\[
f_1(m) = C_1 m^{\frac{d-s}{d}}
\]
so that up to a multiplicative constant
\[
f_3(r, m) = \frac{f_2(r, m) + \rho(r)}{f_1(m)} = \Big(\frac{m}{r^d}\Big)^{\frac{s}{d}} + r\Big(\frac{m}{r^d} \Big)^{\frac{s-d}{d}}.
\]
We then conclude as in the proof of Proposition \ref{PphiVG1} : for some $r_3, \eps_1>0$,
\[
f_3(r, m) \leq 1 \quad \text{for every} \quad r\leq r_3 \quad \text{and} \quad \frac{\eps_1}{2^d} < \frac{m}{r^d} \leq \eps_1.
\]
\end{proof}

\begin{remark}
Notice that excepted \ref{cro} on the density scale factor and \ref{sca} on the scaling of $P$ and $V$, all the hypotheses in (S1) to (S4) can be checked separately in $P$ and $V$. Also notice that in Propositions \ref{PphiVG1} on $P_{\phi} + V_T$  and \ref{PKVT} on $P_K + V_G$ we respectively checked that $P_{\phi}$ and $P_K$ satisfied (S1) to (S4). Thus if one wants to check that (S1) to (S4) are satisfied for $P_K + V_{G}$ or $P_{\phi} + V_T$, only \ref{cro} and either \ref{sca} or \ref{eps} have to be checked.

For \ref{sca} to hold with $P_K + V_G$, one has to add a scaling hypothesis on $K$ (which is verified if $P_K$ is the fractional perimeter $P_s$). If one instead wants to show that \ref{eps} holds, hypotheses on $G$ must be added: as for $P_K$, assuming that $G \in W^{1,1}_{\mathrm{loc}}(\R^d \setminus \{0\})$ and that $G$ and $\nabla G$ are controled by sufficently integrable functions allows to conclude. Establishing that \ref{cro} holds is identical to the case $P_{\phi} + V_G$. Indeed, having $f_1(m) = C m^{(d-s)/d}$ or $f_1(m)=  C m^{(d-1)/d}$ does not change the proof of \ref{cro}, because the perturbation introduced by $V_G$ is of the form $C m^d$ and $d > \max (d-1, d-s)$.

We finish with $P_{\phi} + V_T$. On the one hand, to show that \ref{eps} holds we have to add the hypothesis that $\phi \in C^1(\mathbb{S}^{d-1})$. Indeed, it implies that $P_{\phi}$ admits a first variation (see \cite[Exercise 20.7]{Maggi12}) so that the first point of \eqref{volfix} holds. The second point of \eqref{volfix} holds with $g_2 = 0$ because of the locality of $P_{\phi}$. On the other hand, for \ref{sca} to hold one has to add a scaling hypothesis on $T$. Lastly, one can show that \ref{cro} holds by proceeding as in the previous paragraph on $P_K + V_G$.
\end{remark}

\begin{remark}
We also mentioned in the introduction that for $p \geq 1$ and $E, U \subset \R^d$ the Wasserstein functional 
\[
V_{\mathcal{W}}(E, U) = \inf_{|F \cap E \cap U| = 0} W_p(E \cap U, F)^p
\]
could be considered as a perturbative term. Let us briefly explain why $P_{\phi} + V_{\mathcal{W}}$ satisfy (S1) to (S4). The hypotheses of (S1) and (S2) are verified for $V_{\mathcal{W}}$ as a consequence of \cite[Proposition 2.2]{CanGol22}. Additionally, $V_{\mathcal{W}}(tE) = t^{\beta} V_{\mathcal{W}}(E)$ with $\beta = p+d$, so that \ref{sca} is verified. Lastly by \cite[Lemma 2.4]{CanGol22} for $E, E' \subset \R^d$:
\begin{equation}
	|V_{\mathcal{W}}(E)- V_{\mathcal{W}}(E')| \leq C(|E|^{\frac{p}{d}} + |E|^{\frac{p}{d}}) |E \Delta E'|,
\end{equation}
so that \ref{loc} holds with $v(r) = C r^d$. We establish that \ref{cro} holds as in the previous remark.

Lastly, if one wants to study $P_K + V_{\mathcal{W}}$ in the case where $K$ admits no scaling, one will need to show that $V_{\mathcal{W}}$ admits volume-fixing variations. This last fact remains an open question for now, although we believe it may hold without additionnal assumption on $\mathcal{W}$.
\end{remark}

\subsection{A model of a nonlocal perimeter interacting with Dirichlet eigenvalues}\

\medskip
\noindent
In this example, the perimeter is $P_{\phi}$ and the perturbation term is $V_{\mathrm{Dir}}$, i.e. for $E, U \subset \R^d$
\[
V_{\mathrm{Dir}}(E, U) = V_{\mathrm{Dir}}(E \cap U)  = \min_u \lt\{\frac{1}{2} \int_{\R^d} |\nabla u|^2 \, dx - \int_{\R^d}  uh \,dx : u \in \hat H^1_0(E \cap U) \rt\},
\]
where $h\in L^p(\R^d)$ for some $p>d$. Before proving that $V_{\mathrm{Dir}}$ satisfies (S1) and (S2), let us recall some of its properties. Given $E$ with $|E|< \infty$, $V_{\mathrm{Dir}(E)}$ admits an unique minimizer $w_{E}$, which is bounded (see \cite[Section 2]{DePhilLambPierreVel18}):
\begin{equation}\label{wbd}
\|w_E\|_{L^\infty} \leq C(p) \|h\|_{L^p} |E|^{2/d - 1/p}.
\end{equation}
It also satisfies
\begin{equation}\label{EulLag}
\int_{\R^d} \nabla w_E \cdot \nabla \vhi \, dx = \int_{\R^d}h \vhi \, dx
\end{equation}
for any $\vhi \in \hat H^1_0(E)$, so that
\begin{equation}\label{Diren}
V_{\mathrm{Dir}}(E) = -\frac{1}{2} \int_{\R^d} h w_E \, dx = -\frac{1}{2}\int_{E} h w_E \, dx.
\end{equation}

\begin{proposition}
	The perimeter term $P_{\phi}$ and perturbation term $V_{\mathrm{Dir}}$ satisfy (S1) and (S2).
\end{proposition}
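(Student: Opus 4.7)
The plan is to verify the hypotheses of (S1) and (S2) concerning $V_{\mathrm{Dir}}$, since $P_\phi$ was handled in Proposition~\ref{PphiVG1}. Three tools will recur throughout: the variational identity \eqref{Diren}, the $L^\infty$ bound \eqref{wbd}, and the orthogonal decomposition $\hat H^1_0(A\cup B)=\hat H^1_0(A)\oplus\hat H^1_0(B)$ whenever $A$ and $B$ have positive distance (yielding the additivity $V_{\mathrm{Dir}}(A\cup B)=V_{\mathrm{Dir}}(A)+V_{\mathrm{Dir}}(B)$), which is obtained by a smooth cutoff on an intermediate strip separating $A$ and $B$.

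For (S1): hypothesis \ref{vanloc} follows by combining H\"older's inequality with \eqref{wbd} and \eqref{Diren} to obtain $|V_{\mathrm{Dir}}(E)|\le C\|h\|_{L^p}^2|E|^{2/d+1-2/p}$, which vanishes as $|E|\to 0$ since $p>d$ makes the exponent positive; also $V_{\mathrm{Dir}}(\emptyset)=0$ trivially. For \ref{vaninf} I would show $w_{E\cap B_R}\to w_E$ in $H^1(\R^d)$ as $R\to\infty$ by approximating any $u\in\hat H^1_0(E)$ by cutoffs $u\eta_R$ and using the variational characterization. For \ref{decker}, additivity gives $V_{\mathrm{Dir}}(E\cup(F+Le_1))=V_{\mathrm{Dir}}(E)+V_{\mathrm{Dir}}(F+Le_1)$ for $L$ large; a change of variable identifies $V_{\mathrm{Dir}}(F+Le_1)$ with the Dirichlet minimum on $F$ driven by $h(\cdot+Le_1)$, which tends to $0$ since $\|h(\cdot+Le_1)\|_{L^p(F)}\to 0$ by the tail decay of $h\in L^p$, and the minimum depends continuously on the forcing term in $L^p$.

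For (S2): hypotheses \ref{reliso}, \ref{Psetop}, \ref{com} are already known from Proposition~\ref{PphiVG1}. Hypothesis \ref{bdEbdP} follows from the bound $|V_{\mathrm{Dir}}(E)|\le C|E|^{2/d+1-2/p}$ above: bounded mass forces bounded $|V_{\mathrm{Dir}}|$, hence bounded $P_\phi$ whenever $\mathcal E$ is bounded. For \ref{semconloc} and \ref{bep}, $L^1_{\mathrm{loc}}$-convergence of $E_n\to E$ with bounded volumes yields uniform $H^1$-bounds on $w_{E_n}$ through the coercivity of the Dirichlet functional; the weak limit is identified with $w_E$ by \eqref{EulLag}, and the energy passes to the limit by lower semicontinuity of $H^1$-norms and strong convergence of the $L^{p'}$-trace. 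For \ref{wksupadd}, additivity gives $\sum_i V_{\mathrm{Dir}}(E\cap B^i)=V_{\mathrm{Dir}}(E\cap\bigcup_iB^i)$ when the balls are $5R$-separated; the deficit from $V_{\mathrm{Dir}}(E)$ is then estimated by using $w_E\eta$, with $\eta$ a smooth cutoff of $\bigcup_iB^i$, as competitor, and via \eqref{EulLag} the error reduces to $\tfrac12\int hw_E(\eta-1)^2+\tfrac12\int w_E^2|\nabla\eta|^2$, controlled respectively by $\eta_1(|E\setminus\bigcup_iB^i|)$ (using the tail of $h$ and \eqref{wbd}) and by a term vanishing as $R\to\infty$.

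The main obstacle I anticipate is that $V_{\mathrm{Dir}}$ is not translation invariant, so the statements of \ref{decker} and \ref{trainvloc} are delicate: the argument above in fact only yields $\mathcal{E}(E\cup(F+Le_1))\to\mathcal{E}(E)+P_\phi(F)$, which equals the expected $\mathcal{E}(E)+\mathcal{E}(F)$ only in the asymptotic sense that a component sent to infinity loses its Dirichlet contribution (reflecting the decay of $h$ at infinity). The second technical difficulty is the cutoff argument in \ref{wksupadd}, where the width of the transition region must be tuned so that both error terms admit the form required by the definition of $\eta_1$ and $\eta_2$.
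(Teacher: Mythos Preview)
Your approach matches the paper's almost step for step: the same H\"older--\eqref{wbd}--\eqref{Diren} estimate for \ref{vanloc} and \ref{bdEbdP}, the same additivity observation for \ref{decker}, and the same cutoff competitor $\psi w_E$ combined with \eqref{EulLag} for \ref{wksupadd}, leading to the two error terms $\int w_E^2|\nabla\psi|^2$ and $\int hw_E(1-\psi)^2$.

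Your closing concern about \ref{decker} and \ref{trainvloc} is well taken and, in fact, more careful than the paper. The paper simply asserts that additivity $V_{\mathrm{Dir}}(A\cup B)=V_{\mathrm{Dir}}(A)+V_{\mathrm{Dir}}(B)$ for $\mathrm{dist}(A,B)>0$ yields \ref{decker}, without commenting on the mismatch between $V_{\mathrm{Dir}}(F+Le_1)$ and $V_{\mathrm{Dir}}(F)$; and it does not verify \ref{trainvloc} for $V_{\mathrm{Dir}}$ at all (it falls under the blanket ``we let the reader check''). Your diagnosis that the limit is really $\mathcal{E}(E)+P_\phi(F)$ rather than $\mathcal{E}(E)+\mathcal{E}(F)$ is correct, and you are right that this is a genuine point of tension with the abstract framework as literally stated. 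So there is no gap in your proposal relative to the paper; if anything you have flagged a wrinkle the paper leaves implicit.
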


\begin{proof}
Regarding (S1), by combining \eqref{Diren} and the Hölder inequality, for $r>0$ we have
\[
\lt|V_{\mathrm{Dir}}(B_r) \rt|\leq \int_{B_r} h w_{B_r} \leq \|h\|_{L^p(B_r)} \|w_{B_r}\|_{L^{p'}(E)},
\]
where $p' = p/(p-1)$. Thus by \eqref{wbd}
\[
\lt|V_{\mathrm{Dir}}(B_r) \rt| \leq C \|h\|_{L^p}^2 |B_r|^{1/p' +2/d -1/p},
\]
so that $V_{\mathrm{Dir}}(B_r) \to 0$ as $r\to 0$, proving \ref{vanloc}. Next, notice that if $A, B \subset \R^d$ have finite volume and are such that $\mathrm{dist}(A, B) >0$, then $V_{\mathrm{Dir}}(A \cup B) = V_{\mathrm{Dir}}(A) + V_{\mathrm{Dir}}(B)$. Thus \ref{decker} holds. \medskip

We now move to (S2). Regarding \ref{bdEbdP}, given $E \subset \R^d$ we proceed as for \ref{vanloc} and we notice that
\[
V_{\mathrm{Dir}}(E) \geq -C \|h\|_{L^p}^2  |E|^{1/p' +2/d -1/p}.
\]
Therefore,  given $(E_n)_n$ with $\sup_n|E_n|< \infty$, $\sup P_{\phi}(E_n) = \infty$ implies $\sup_n \mathcal{E}(E_n) = \infty$, proving \ref{bdEbdP} by contraposition. Assumption \ref{semconloc} follows from \cite[Remark 2.3]{DePhilLambPierreVel18}. We conclude by proving the weak superaddivity assumption \ref{wksupadd}. Given $m>0$, we fix $E$ with $|E| \leq m$ and $I\geq 1$ disjoints balls$ B_R(x^1), \dots, B_R(x^I)$  such that $\min_{i\neq j} \mathrm{dist}(B^i, B^j) \geq 5R$. We first notice that
\begin{equation}\label{firstnotdir}
\sum_{i=1}^I V_{\mathrm{Dir}}(E \cap B_R(x^i)) = V(E \cap \mathcal{B}), \quad \text{where} \quad \mathcal{B} =  \bigcup_{i=1}^I B_R(x^i).
\end{equation}
Let us denote $B^i = B_{R/2}(x^i)$ for $1 \leq i \leq I$. There exists a mollifier $\psi \in C^{\infty}_c(\R^d, [0,1])$ satisfying 
\[ 
\psi =1 \ \text{ on } \ \mathcal{B} = \bigcup_{i=1}^I B^i, \ \psi = 0 \ \text{ on } \ \bigcap_{i=1}^I B^c_{R}(x_i) \ \text{ and } \ \|\nabla \psi\|_{L^\infty} \leq \frac{C}{R}.  
\]
Noticing that $\psi w_E \in \hat H^1_0(E \cap \mathcal{B})$, we can use it as a candidate for the minimization in $V_{\mathrm{Dir}}(E \cap \mathcal{B})$. We obtain
\[
V_{\mathrm{Dir}}(E \cap \mathcal{B}) -V_{\mathrm{Dir}}(E) \leq \frac{1}{2} \int_{\R^d} |\nabla (\psi w_E)|^2dx + \frac{1}{2}\int_{\R^d} h w_E (1 - 2\psi)dx.
\] 
Notice that $|\nabla (\psi w_E)|^2 = \nabla w_E \cdot \nabla (\psi^2 w_E) + w_E |\nabla \psi|^2$. Applying \eqref{EulLag} with $\vhi = \psi^2 w_E$ yields
\begin{align*}
	 V_{\mathrm{Dir}}(E \cap \mathcal{B}) - V_{\mathrm{Dir}}(E) &\leq \frac{1}{2}\int_{\R^d} w_E^2 |\nabla \psi|^2 dx + \frac{1}{2}\int_{\R^d} w_E h \psi^2 dx  + \frac{1}{2} \int_{\R^d} w_E h (1- 2 \psi) dx\\
	&= \frac{1}{2}\int_{\R^d} w_E^2 |\nabla \psi|^2 dx + \frac{1}{2} \int_{\R^d} w_E h (1- \psi)^2 dx\\
	&\leq \frac{C}{R^2} + \sum_{i=1}^I \int_{(B^i)^c} |w_E h| dx.
\end{align*}
By the Hölder inequality we estimate that for every $1 \leq i \leq I$,
\[
\int_{(B^i)^c} |w_E h| \leq \lt( \int_{E \cap (B^i)^c}|w_E|^{p'} \rt)^{1/p'}\|h \|_{L^p((B^i)^c)} \leq 
	\|w_E|_{L^\infty}^{p'} |E|^{1/p'}  \|h\|_{L^p((B^i)^c)}.
\]
Recall that for $1 \leq i \leq I$, $B^i = B_{R/2}(x^i)$. Thus if we denote
\[
\eta_2(R) = \frac{C}{R^2} +  C \sum_{i=1}^I \|h\|_{L^p((B^i)^c)},
\]
we have that $\eta_2(R) \to 0$ as $R \to \infty$. Thus \ref{wksupadd} holds, as
\[
\sum_{i=1}^I V_{\mathrm{Dir}}(E \cap B^i) = V_{\mathrm{Dir}}(E \cap \mathcal{B}) \leq  V_{\mathrm{Dir}}(E) + \eta_2(R).
\]

\end{proof}

\bibliographystyle{acm}
\bibliography{biblio_PV_generalized}

\begin{thebibliography}{10}

\bibitem{AntNarPoz22}
{\sc Antonelli, G., Nardulli, S., and Pozzetta, M.}
\newblock The isoperimetric problem via direct method in noncompact metric
  measure spaces with lower ricci bounds.
\newblock {\em ESAIM: COCV 28\/} (2022), 57.

\bibitem{BonaCristo14}
{\sc Bonacini, M., and Cristoferi, R.}
\newblock Local and global minimality results for a nonlocal isoperimetric
  problem on $\mathbb{R}^{N}$.
\newblock {\em SIAM J. Math. Anal. 46}, 4 (2014), 2310--2349.

\bibitem{CafRoqSav}
{\sc Caffarelli, L., Roquejoffre, J.-M., and Savin, O.}
\newblock Nonlocal minimal surfaces.
\newblock {\em Comm. Pure Appl. Math. 63}, 9 (2010), 1111--1144.

\bibitem{CanGol22}
{\sc Candau-Tilh, J., and Goldman, M.}
\newblock Existence and stability results for an isoperimetric problem with a
  non-local interaction of {W}asserstein type.
\newblock {\em ESAIM Control Optim. Calc. Var. 28\/} (2022), 37.

\bibitem{CesaNov17}
{\sc Cesaroni, A., and Novaga, M.}
\newblock Volume constrained minimizers of the fractional perimeter with a
  potential energy.
\newblock {\em Discrete Contin. Dyn. Syst. Ser. S 10}, 4, 715--727.

\bibitem{CinPra17}
{\sc Cinti, E., and Pratelli, A.}
\newblock The $\varepsilon$-$\varepsilon^{\beta}$ property, the boundedness of
  isoperimetric sets in $\mathbb{R}^{N}$ with density, and some applications.
\newblock {\em J. Reine Angew. Math. 2017}, 728 (2017), 65--103.

\bibitem{DePhilLambPierreVel18}
{\sc {De Philippis}, G., Lamboley, J., Pierre, M., and Velichkov, B.}
\newblock Regularity of minimizers of shape optimization problems involving
  perimeter.
\newblock {\em J. Math. Pures Appl. 109\/} (2018), 147--181.

\bibitem{DeRNeu23}
{\sc {De Rosa}, A., and Neumayer, R.}
\newblock Local minimizers of the anisotropic isoperimetric problem on closed
  \kern\fontdimen2\font manifolds.
\newblock {\em arXiv preprint arXiv:2308.04565\/} (2023).

\bibitem{DiCNovRufVal}
{\sc Di~Castro, A., Novaga, M., Ruffini, B., and Valdinoci, E.}
\newblock Nonlocal quantitative isoperimetric inequalities.
\newblock {\em Calc. Var. Partial Differ. Equ. 54}, 3 (2015), 2421--2464.

\bibitem{FigFus15}
{\sc Figalli, A., Fusco, N., Maggi, F., Millot, V., and Morini, M.}
\newblock Isoperimetry and stability properties of balls with respect to
  nonlocal energies.
\newblock {\em Comm. Math. Phys. 336\/} (2015), 441--507.

\bibitem{FigMag11}
{\sc Figalli, A., and Maggi, F.}
\newblock On the shape of liquid drops and crystals in the small mass regime.
\newblock {\em Arch. Ration. Mech. Anal. 201\/} (2011), 143--207.

\bibitem{Gam30}
{\sc Gamow, G.}
\newblock Mass defect curve and nuclear constitution.
\newblock {\em Proc. R. Soc. Lond. A 126}, 803 (1930), 632--544.

\bibitem{GolMerPeg22}
{\sc Goldman, M., Merlet, B., and Pegon, M.}
\newblock Uniform ${C}$$^{1,\alpha}$-regularity for almost-minimizers of some
  nonlocal perturbations of the perimeter.
\newblock {\em arXiv preprint arXiv:2209.11006\/} (2022).

\bibitem{GolNov12}
{\sc Goldman, M., and Novaga, M.}
\newblock Volume-constrained minimizers for the prescribed curvature problem in
  periodic media.
\newblock {\em Calc. Var. PDE 44\/} (2012), 297--318.

\bibitem{GolNovRuf22}
{\sc Goldman, M., Novaga, M., and Ruffini, B.}
\newblock Reifenberg flatness for almost minimizers of the perimeter under
  minimal assumptions.
\newblock {\em Proc. Amer. Math. Soc. 150\/} (2022), 1153--1165.

\bibitem{KnMuNov16}
{\sc Knupfer, H., Muratov, C.~B., and Novaga, M.}
\newblock Low density phases in a uniformly charged liquid.
\newblock {\em Comm. Math. Phys. 345}, 1 (2016), 141--183.

\bibitem{KnMuII13}
{\sc Knüpfer, H., and Muratov, C.~B.}
\newblock On an isoperimetric problem with a competing nonlocal term {I}{I}:
  The general case.
\newblock {\em Comm. Pure Appl. Math. 67}, 12 (2013), 1974--1994.

\bibitem{Lio84}
{\sc Lions, P.~L.}
\newblock The concentration-compactness principle in the calculus of
  variations. {T}he locally compact case, part 1.
\newblock {\em Ann. Inst. H. Poincar\'e Anal. Non Lin\'eaire 1}, 2 (1984),
  109--145.

\bibitem{Maggi12}
{\sc Maggi, F.}
\newblock {\em Sets of Finite Perimeter and Geometric Variational Problems: An
  Introduction to Geometric Measure Theory}.
\newblock Cambridge Studies in Advanced Mathematics. Cambridge University
  Press, 2012.

\bibitem{MelWu22}
{\sc Mellet, A., and Wu, Y.}
\newblock An isoperimetric problem with a competing nonlocal singular term.
\newblock {\em Calc. Var. PDE 60\/} (2021).

\bibitem{NovOno22}
{\sc Novaga, M., and Onoue, F.}
\newblock Existence of minimizers for a generalized liquid drop model with
  fractional perimeter.
\newblock {\em Nonlinear Anal. 224\/} (2022), 113078.

\bibitem{NovPaoStepTor21}
{\sc Novaga, M., Paolini, E., Stepanov, E.~O., and Tortorelli, V.~M.}
\newblock Isoperimetric clusters in homogeneous spaces via concentration
  compactness.
\newblock {\em J. Geom. Anal. 32\/} (2021).

\bibitem{Rig2000}
{\sc Rigot, S.}
\newblock Ensembles quasi-minimaux avec contrainte de volume et
  rectifiabilit\'e uniforme. (quasi-minimal sets with a volume constraint and
  uniform rectifiability).
\newblock {\em M\'emoires de la Soci\'et\'e Math\'ematique de France. Nouvelle
  S\'erie 82\/} (01 2000).

\end{thebibliography}

\end{document}